\theoremstyle{plain}
\newtheorem{theorem}{Theorem}
\numberwithin{equation}{section}
\newcommand{\LL}{\mathbb{L}}
\newcommand{\da}{\vcrossing} 
\newcommand{\mtd}{\medtriangledown}
\begin{document}

\title {Projective space: lines and duality}

\date{}

\author[P.L. Robinson]{P.L. Robinson}

\address{Department of Mathematics \\ University of Florida \\ Gainesville FL 32611  USA }

\email[]{paulr@ufl.edu}

\subjclass{} \keywords{}

\begin{abstract}

We offer an axiomatic presentation of three-dimensional projective space that adopts the line as its fundamental element and renders automatic the principle of duality. 

\end{abstract}

\maketitle

\medbreak

\section{Introduction} 

\medbreak 

The traditional axiomatic approach to three-dimensional projective space takes point and line as its primary elements and incidence as its fundamental relation, in terms of which the notion of plane is derived as a secondary element. The principle of duality for projective space asserts that if in any theorem of the system we interchange the words point and plane (and make corresponding changes related to the mere terminology of incidence) then the result is also a theorem; for instance, the dual of the axiom that two points are joined by at least one line is the theorem that two planes meet in at least one line. Notice that line is a `self-dual' notion, whereas duality pairs point (a primary or fundamental notion) and plane (a secondary or derived notion). This circumstance entails some slight awkwardness of translation  in the proof of the duality principle: for example, if a statement refers to a pencil of planes (that is, to all the planes containing a given line) then its dual refers to all the points contained in a line (that is, to the line itself) and vice versa. See [VY] Chapter I and Theorem 11 in particular.  

\medbreak 

This asymmetry of the traditional axiomatic system, wherein the fundamental notion of point and the derived notion of plane are paired by duality, prompted us to consider setting up for projective space an axiomatic foundation that is better suited to duality. Ideally, the axioms of such a foundation should be manifestly self-dual, so that the principle of duality becomes automatic and requires no proof; the primary elements should be self-dual, the secondary elements paired by duality. The natural candidate for a self-dual primary element is of course the line; the natural candidate for a fundamental relation is abstract incidence of lines. Here we face something of a dilemma: in projective space, an incident pair of lines determines {\it both} a point {\it and} a plane; what we require is a device (expressed in terms of {\it lines}) for telling point and plane apart. Upon reflection, perhaps we should not expect that such a device is available: after all, if we are serious about duality then point and plane should be in a sense indistinguishable. As will be clear from our chosen axiomatic development, it is in fact possible to tell apart points and planes, in the sense that these secondary elements can be separated into two types; the question as to which type represents points and which type represents planes is then a matter of choice. 

\medbreak 

In Section 1 we present our self-dual axiom system, having line as its primary element and incidence as its fundamental relation; the key to the system lies in considering incident line pairs and their incident skew pairs. In Section 2 we introduce point and plane as secondary elements; we draw attention to the automatic duality principle and derive a number of theorems having implicit geometrical significance. In Section 3 we bring this geometrical significance to the fore and relate our axiom system to the traditional axiom system of Veblen and Young; for this traditional approach to projective space, we recommend the classic treatise [VY]. 

\medbreak 

\section{Lines}

\medbreak 

In this section we shall present our axioms for projective space, along with only as much of the theory as is necessary to frame the axioms. 

\medbreak 

Let $\LL$ be a nonempty set, whose elements we shall call {\it lines}. On $\LL$ we suppose a reflexive symmetric binary relation, which we denote by $\da$  and call {\it incidence}; this incidence relation $\da$ on $\LL$ has additional properties, encoded in AXIOM [1] - AXIOM [4] below. When $a, b \in \LL$ are incident we write $a \da b$; when $a, b \in \LL$ are not incident we call them {\it skew} and write $a \: | \: b$. More generally, we may refer to an arbitrary subset $S \subseteq \LL$ as being incident (respectively, skew) when it is so in the pairwise sense: that is, each pair of distinct lines $a, b \in S$ satisfies $a \da b$ (respectively, $a \: | \: b$). 

\medbreak 

When $S \subseteq \LL$ again, we write $S^{\da}$ for the set comprising all those lines that are incident to each line in $S$:  
$$S^{\da} = \{ \l \in \LL : (\forall s \in S) \ l\da s \}.$$
As usual, the familiar `Galois' properties hold: thus, if $S, S_1$ and $S_2$ are subsets of $\LL$ then 
$$S_1 \subseteq S_2 \Rightarrow S_2^{\da} \subseteq S_1^{\da}$$
$$S \subseteq S^{\da \da}$$ 
$$S^{\da \da \da} = S^{\da}.$$
When $\l_1, \dots , \l_n \in \LL$ we shall find it convenient to write 
$$[ \l_1 \dots  \l_n ] = \{ \l_1, \dots , \l_n \}^{\da}.$$

\medbreak 

After this preparation, we can begin to present the additional properties that we suppose satisfied by lines and incidence. These properties are embodied in four axioms, the first and most fundamental of which asserts somewhat more than the fact that each line is incident to others. 

\medbreak 

$\bullet$ AXIOM [1]: For each $\l \in \LL$ the set $\l^{\da} = [ \l ]$ contains a (pairwise) skew triple of lines. 

\medbreak 

Thus, if $\l \in \LL$ is arbitrary then there exist $x, y, z \in \l^{\da}$ such that $y\: | \:z, \: z\: | \:x, \: x\: | \:y$; in particular, $\l^{\da}$ is nonempty. 

\medbreak 

Our second axiom refers to two incident lines and their associated skew pairs; we separate this axiom into three parts for clarity. 

\medbreak 

$\bullet$ AXIOM [2.1]: If $a, b \in \LL$ are distinct and incident then $[ a b ] = \{a, b \}^{\da} = a^{\da} \cap b^{\da}$ contains skew pairs. 

\medbreak 

Equivalently, if $a, b$ is any incident pair of lines then $\{ a, b \}^{\da} \nsubseteq \{a, b\}^{\da \da}$. Notice that if $a = b$ then AXIOM [1] provides $[ a b ]$ with more than a skew pair. 

\medbreak 

$\bullet$ AXIOM [2.2]: If $a, b \in \LL$ are distinct and incident and if $z \in \{ a, b \}^{\da} \setminus \{a, b\}^{\da \da}$ then $[ a b z ]$ contains no skew pairs. 

\medbreak 

Otherwise said, if $a, b, z$ are as stated then 
$$x, y \in [ a b z ] \Rightarrow x \da y;$$
as we shall see later, this is essentially a (self-dual) version of the Pasch axiom familiar from traditional projective geometry. Of course, the existence of $z$ here is guaranteed by AXIOM [2.1]. 

\medbreak 

$\bullet$ AXIOM [2.3]: If $a, b \in \LL$ are distinct and incident and if $x, y \in [ a b ]$ is any skew pair then $a^{\da} \cap b^{\da} \subseteq x^{\da} \cup y^{\da}.$ 

\medbreak 

In other words, any line incident to $a$ and to $b$ is incident to $x$ or to $y$ (or to both). Note that in fact 
$$[ a b ] = [ a b x ] \cup [ a b y ].$$
Of course, the existence of $x, y$ here is guaranteed by AXIOM [2.1]. 

\medbreak 

Let $a, b \in \LL$ be two incident lines. Write 
$$\Sigma (a, b) = \{ a, b \}^{\da} \setminus \{a, b\}^{\da \da} = [ a b ] \setminus [ a b ]^{\da}$$ 
for the set of all lines that are one of a skew pair in $[ a b ]$. Henceforth, statements such as $c \in \Sigma (a, b)$ implicitly include the information that $a$ and $b$ are distinct incident lines. 

\begin{theorem} \label{Sigma}
For three lines $a, b, c \in \LL$ the conditions 
$$a \in \Sigma (b, c), \; b \in \Sigma (c, a), \; c \in \Sigma (a, b)$$
are equivalent. 
\end{theorem}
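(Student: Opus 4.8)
The plan is to exploit the manifest cyclic symmetry of the three conditions under the relabelling $a \mapsto b \mapsto c \mapsto a$: it suffices to prove one implication, say $c \in \Sigma(a,b) \Rightarrow a \in \Sigma(b,c)$, since the remaining two implications follow by applying this one to the cyclically permuted triples, and together the three implications give the asserted equivalence. So I would fix the hypothesis $c \in \Sigma(a,b)$, which unpacks as: $a$ and $b$ are distinct incident lines and $c \in \{a,b\}^{\da}\setminus\{a,b\}^{\da\da}$.

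First I would record the easy consequences. Membership of $c$ in $\{a,b\}^{\da}$ gives $a \da c$ and $b \da c$, so that $\{a,b,c\}$ is an incident triple; and since $\{a,b\}\subseteq\{a,b\}^{\da\da}$ by the Galois properties while $c\notin\{a,b\}^{\da\da}$, the line $c$ is distinct from $a$ and from $b$. Consequently $b$ and $c$ are distinct incident lines, so that $\Sigma(b,c)$ and $[bc]=\{b,c\}^{\da}$ make sense, and $a\in[bc]$ because $a\da b$ and $a\da c$. The only remaining obligation is to show $a\notin[bc]^{\da}=\{b,c\}^{\da\da}$.

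This last point carries the real content, and I would argue it by contradiction. If $a\in\{b,c\}^{\da\da}$, then the Galois properties give $\{b,c\}^{\da}\subseteq a^{\da}$; in words, every line incident to both $b$ and $c$ is also incident to $a$, and hence lies in $\{a,b,c\}^{\da}=[abc]$. Thus $[bc]\subseteq[abc]$. Now AXIOM [2.1], applied to the distinct incident pair $b,c$, supplies a skew pair inside $[bc]$, and this skew pair then lies inside $[abc]$. But AXIOM [2.2], applied to the distinct incident pair $a,b$ with $c\in\{a,b\}^{\da}\setminus\{a,b\}^{\da\da}$ playing the role of the line $z$, says precisely that $[abc]$ contains no skew pair. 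That contradiction forces $a\notin[bc]^{\da}$, i.e.\ $a\in\Sigma(b,c)$, completing the implication.

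I do not anticipate a genuine obstacle: the proof is short and uses only AXIOM [2.1] and AXIOM [2.2] --- neither AXIOM [1] nor AXIOM [2.3] is needed. The two things to handle with care are the bookkeeping that justifies even speaking of $\Sigma(b,c)$, namely $b\ne c$ and $b\da c$ (which comes from $\{a,b\}\subseteq\{a,b\}^{\da\da}$ together with $c\in\{a,b\}^{\da}$), and the small Galois manipulation that converts $a\in\{b,c\}^{\da\da}$ into the inclusion $[bc]\subseteq[abc]$, which is exactly what makes AXIOM [2.1] and AXIOM [2.2] collide.
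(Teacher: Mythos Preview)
Your argument is correct and follows essentially the same route as the paper: assume one of the three conditions and the negation of another, then use the Galois inclusion to trap a two-line set $[xy]$ (which has skew pairs by AXIOM~[2.1]) inside $[abc]$ (which has none by AXIOM~[2.2]). The only cosmetic difference is that you run the cycle in the direction $c\in\Sigma(a,b)\Rightarrow a\in\Sigma(b,c)$ and apply AXIOM~[2.1] to the pair $b,c$, whereas the paper runs it as $a\in\Sigma(b,c)\Rightarrow c\in\Sigma(a,b)$ and applies AXIOM~[2.1] to the pair $a,b$; up to relabelling these are the same proof, and your version is if anything more explicit about the bookkeeping that licenses speaking of $\Sigma(b,c)$.
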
 

\begin{proof} 
Suppose that $a \in \Sigma (b, c)$ but $c \notin \Sigma (a, b)$. As $c \in \{ a, b\}^{\da}$ already, it follows that $c \in  \{ a, b \}^{\da \da}$ whence $\{ a, b, c \} \subseteq \{ a, b \}^{\da \da}$ and therefore $\{ a, b, c \}^{\da} \supseteq \{ a, b \}^{\da \da \da} = \{ a, b \}^{\da}$ or 
$$[ a b ] \subseteq [ a b c ].$$ 
We have reached a contradiction: on the one hand, $[ a b ]$ contains skew pairs by AXIOM [2.1]; on the other hand, $[ a b c ]$ contains none by AXIOM [2.2]. 
\end{proof} 

The tension here between AXIOM [2.1] and AXIOM [2.2] appears again in Theorem \ref{coh} and Theorem \ref{exch}. 

\medbreak 

By a {\it triad} we shall mean three pairwise-incident lines $a, b, c$ that satisfy any (hence each) of the equivalent conditions in Theorem \ref{Sigma}. 

\medbreak 

Our third axiom refers to such triads of lines. 

\medbreak 

$\bullet$ AXIOM [3]: For each triad $a, b, c \in \LL$ there exists a triad $p, q, r \in \LL$ such that $[a b c] \cap [p q r] = \emptyset$. 

\medbreak 

In preparation for our last axiom, we require a couple of theorems and a definition. 

\medbreak 

Recall that incidence is reflexive and symmetric on $\LL$; on $\Sigma (a, b)$ more is true. 

\begin{theorem} \label{class}
Incidence restricts to $\Sigma (a, b)$ as an equivalence relation having precisely two equivalence clases. 
\end{theorem}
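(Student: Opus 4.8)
The plan is to use a skew pair inside $[ a b ]$ to cut $\Sigma (a, b)$ into two pieces and then verify that those pieces are the two classes. Write $\Sigma = \Sigma (a, b)$ and $[ a b ] = a^{\da} \cap b^{\da}$. AXIOM [2.1] supplies a skew pair $x, y \in [ a b ]$; since $x \: | \: y$, neither $x$ nor $y$ can lie in $[ a b ]^{\da}$, so in fact $x, y \in \Sigma$. Put $X = \{ c \in \Sigma : c \da x \}$ and $Y = \{ c \in \Sigma : c \da y \}$. I will establish four facts: $\Sigma = X \cup Y$; $X \cap Y = \emptyset$; any two lines of $X$ are incident, and likewise any two lines of $Y$; and every line of $X$ is skew to every line of $Y$. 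Granting these, incidence on $\Sigma$ is reflexive and symmetric because incidence on $\LL$ is, and it is transitive, since a chain $c \da d \da e$ in $\Sigma$ with $c, e$ in opposite parts would force $d$ to be incident across the partition; moreover $X$ and $Y$ are nonempty ($x \in X$, $y \in Y$) and are exactly the resulting equivalence classes, so there are precisely two.

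Each of the four facts is an instance of the tension between AXIOM [2.1] and AXIOM [2.2] flagged by the authors. For $\Sigma \subseteq X \cup Y$: any $c \in \Sigma \subseteq [ a b ] = a^{\da} \cap b^{\da}$ is incident to $x$ or to $y$, by AXIOM [2.3] applied to the skew pair $x, y$. For $X \cap Y = \emptyset$: were some $c \in \Sigma$ incident to both $x$ and $y$, then $x, y$ would be a skew pair lying in $[ a b c ]$, contradicting AXIOM [2.2] (applicable since $c \in \Sigma (a, b)$); in particular every line of $X$ is skew to $y$ and every line of $Y$ is skew to $x$. For the internal incidence of $X$: a skew pair $c, d$ inside $X$ would be a skew pair inside $[ a b ]$, whence AXIOM [2.3] would make $y \in [ a b ]$ incident to $c$ or to $d$, which is impossible as lines of $X$ are skew to $y$; the case of $Y$ is symmetric, using $x$. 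For the cross-skewness: if $c \in X$ and $d \in Y$ were incident, then by AXIOM [2.2] the set $[ a b c ]$ would contain no skew pair, yet it contains $x$ (incident to $a$, $b$ and to $c$) and $d$ (incident to $a$, $b$ and, by assumption, to $c$), with $x \: | \: d$ --- a contradiction. Assembling these facts as above proves the theorem.

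The bookkeeping is routine: confirming $x, y \in \Sigma$, and checking that various lines belong to sets of the form $[ a b c ]$. The real content --- and the thing to get right --- is this systematic play of ``$[ a b c ]$ has no skew pair'' (AXIOM [2.2]) against ``$[ a b ]$ has a skew pair whose two members cover $[ a b ]$ under incidence'' (AXIOM [2.1] together with AXIOM [2.3]). I foresee no genuine obstacle, but I note that the argument defines the two classes extrinsically, via the chosen pair $x, y$, rather than giving an intrinsic description of them.
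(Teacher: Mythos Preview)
Your proof is correct; it uses the same three axioms as the paper but organizes them differently. The paper proves transitivity first and intrinsically --- from $p \da q$ and $q \da r$ in $\Sigma(a,b)$ one has $p, r \in [abq]$, whence $p \da r$ by AXIOM~[2.2] --- and then counts classes (at least two by AXIOM~[2.1], at most two by AXIOM~[2.3], since any $c \in \Sigma$ meets $x$ or $y$ and so falls into one of their classes). You instead fix the skew pair $x, y$ at the outset and construct the partition $X \cupdot Y$ explicitly, verifying each incidence property in turn; in particular, for internal incidence within $X$ you invoke AXIOM~[2.3] (a hypothetical skew pair in $X$ would force $y$ to meet one of them), where the paper's transitivity argument would use AXIOM~[2.2] at the analogous step. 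Your extrinsic construction makes the two classes concrete from the beginning, at the cost of a little more bookkeeping; the paper's intrinsic version is shorter and answers exactly the worry you raise in your final sentence.
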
 

\begin{proof} 
We need only verify transitivity and count the equivalence classes. For transitivity, let $p, q, r \in \Sigma (a, b)$: if $p \da q$ and $q \da r$ then $p, r \in [ a b q ]$ so that $p \da r$ by AXIOM [2.2].  The class count is also immediate: at least two by AXIOM [2.1]; at most two by AXIOM [2.3]. 
\end{proof} 

Let us write $\Sigma_{\upY} (a, b)$ and $\Sigma_{\mtd} (a, b)$ for the two incidence classes in $\Sigma (a, b)$: thus, $\Sigma (a, b)$ is the disjoint union 
$$\Sigma (a, b) = \Sigma_{\upY} (a, b) \cupdot \Sigma_{\mtd} (a, b).$$
Any two lines in $\Sigma_{\upY} (a, b)$ are incident and any two lines in $\Sigma_{\mtd} (a, b)$ are incident; any line from $\Sigma_{\upY} (a, b)$ and any line from $\Sigma_{\mtd} (a, b)$  are skew. At this stage, neither $\Sigma_{\upY} (a, b)$ nor $\Sigma_{\mtd} (a, b)$ is in any way preferred; we have simply named the incidence classes. 

\medbreak 

\begin{theorem} \label{z}
Let $c', c'' \in \Sigma (a, b)$: if $c' \da c''$ then $[ a b c' ] = [ a b c'' ]$. 
\end{theorem}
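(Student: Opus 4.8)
The plan is to show mutual inclusion $[abc']\subseteq[abc'']$ and $[abc'']\subseteq[abc']$; by symmetry it suffices to do one direction. So suppose $c',c''\in\Sigma(a,b)$ with $c'\da c''$, and let $\l\in[abc']$; I want $\l\da c''$, i.e. $\l\in[abc'']$. Since $\l,c''\in[ab]$ (the former because $\l$ is incident to $a,b,c'$ hence in particular to $a,b$; the latter because $c''\in\Sigma(a,b)\subseteq[ab]$), the natural move is to invoke the structure of $[ab]$ established in Theorem~\ref{class}: every line of $\Sigma(a,b)$ falls into one of the two incidence classes $\Sigma_\upY(a,b)$, $\Sigma_\mtd(a,b)$, and $c'\da c''$ forces $c'$ and $c''$ to lie in the \emph{same} class.

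The crux is then a case split on where $\l$ sits. If $\l\in[ab]^{\da\da}=\{a,b\}^{\da\da}$, then $\l$ is incident to everything in $[ab]$, in particular to $c''$, and we are done. Otherwise $\l\in\Sigma(a,b)$, so $\l$ lies in one of the two incidence classes. If $\l$ is in the same class as $c'$ (and hence as $c''$), then $\l\da c''$ since any two lines in a common class are incident; done again. The remaining, genuinely problematic case is $\l\in[abc']$ but $\l$ in the \emph{opposite} class from $c'$: here $\l$ and $c'$ are skew, yet both lie in $[ab]$, and $\l$ is incident to $c'$ — contradiction, since $\l\da c'$ is part of $\l\in[abc']$. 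So this case cannot arise, and every $\l\in[abc']$ is indeed incident to $c''$.

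I expect the main obstacle to be making the case analysis airtight with respect to the hypotheses that are implicitly carried: the notation $c'\in\Sigma(a,b)$ presupposes $a\neq b$ and $a\da b$, and I should double-check that $[abc']$ and $[abc'']$ are taken with the same (distinct, incident) pair $a,b$, so that Theorem~\ref{class}'s two-class decomposition of $\Sigma(a,b)$ applies uniformly. A secondary point to verify carefully is the claim "$\l\in[abc']$, $\l\notin\{a,b\}^{\da\da}$ $\Rightarrow$ $\l\in\Sigma(a,b)$": this is just $\l\in[ab]\setminus[ab]^{\da}=\Sigma(a,b)$, using $[abc']\subseteq[ab]$ and the definition of $\Sigma(a,b)$ recalled just before Theorem~\ref{Sigma}. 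Once those bookkeeping matters are settled, the argument is a short three-way case check with no computation, and the symmetric inclusion is obtained by swapping the roles of $c'$ and $c''$ (legitimate since $c'\da c''$ is symmetric).
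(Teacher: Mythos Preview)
Your argument is correct, but it takes a detour through Theorem~\ref{class} that the paper avoids entirely. The paper's proof is a single stroke: since $c'' \da a$, $c'' \da b$, and $c'' \da c'$ (the last by hypothesis), we have $c'' \in [abc']$; now both $l$ and $c''$ lie in $[abc']$, and AXIOM~[2.2] (applicable because $c' \in \{a,b\}^{\da} \setminus \{a,b\}^{\da\da}$) forbids skew pairs there, so $l \da c''$ immediately. Your route instead unpacks the two-class structure of $\Sigma(a,b)$ and performs a three-way case split; this works, but it essentially re-derives what AXIOM~[2.2] already hands you outright. One small notational slip: you write $[ab]^{\da\da} = \{a,b\}^{\da\da}$, but in fact $[ab]^{\da\da} = \{a,b\}^{\da\da\da} = \{a,b\}^{\da} = [ab]$; what you want in that first case is $[ab]^{\da} = \{a,b\}^{\da\da}$, which is indeed how you (correctly) phrase it in your later bookkeeping paragraph.
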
 

\begin{proof} 
Let $l \in [ a b c' ]$: as $c'' \in [ a b c' ]$ too, AXIOM [2.2] implies that $l \da c''$; it follows that $l \in [ a b c'' ]$. This proves $[ a b c' ] \subseteq [ a b c'' ]$ while $[ a b c'' ] \subseteq [ a b c' ]$ symmetrically. 
\end{proof} 

On the basis of this theorem, we may well-define 
$$a \upY b = [ a b c_{\upY} ]$$
$$a \mtd b = [ a b c_{\mtd} ]$$
for any choice of $c_{\upY}$ in $\Sigma_{\upY} (a, b)$ and any choice of $c_{\mtd}$ in $\Sigma_{\mtd} (a, b)$. Observe that in these terms, AXIOM [2.3] yields 
$$[a b] = (a \upY b) \cup (a \mtd b).$$

\medbreak 

Our fourth and final axiom refers to two pairs of incident distinct lines and says somewhat more than that at least one line is incident to all of the lines in the two pairs.

\medbreak 

$\bullet$ AXIOM [4]: If $a, b$ and $p, q$ are pairs of distinct incident lines then 
$$(a \upY b) \cap (p \upY q) \neq \emptyset$$
$$(a \mtd b) \cap (p \mtd q) \neq \emptyset.$$

\medbreak 

Thus, if $c \in \Sigma_{\upY} (a, b)$ and $r \in \Sigma_{\upY} (p, q)$ then at least one line is incident to all of $a, b, c, p, q, r$ simultaneously; likewise when $\upY$ is replaced by $\mtd$.

\medbreak 

Notice that AXIOM [3] and AXIOM [4] together serve to coordinate the $\upY - \mtd$ labelling of incidence classes.  At the close of the next section, we observe that this labelling is coordinated across the whole of projective space; for now, we merely note coordination at each triad. 

\begin{theorem} \label{upYmtd}
If $a, b, c \in \LL$ is a triad then precisely one of the following sets of three equivalent conditions holds: \par
{\rm(}$\upY${\rm )} $a \in \Sigma_{\upY} (b, c), \; b \in \Sigma_{\upY} (c, a), \; c \in \Sigma_{\upY} (a, b);$ \par
{\rm (}$\mtd${\rm )}  $a \in \Sigma_{\mtd} (b, c), \; b \in \Sigma_{\mtd} (c, a), \; c \in \Sigma_{\mtd} (a, b).$ 
\end{theorem}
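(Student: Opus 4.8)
For a triad $a,b,c$ let the \emph{label at $a$} be $\upY$ or $\mtd$ according as $a$ lies in $\Sigma_{\upY}(b,c)$ or in $\Sigma_{\mtd}(b,c)$ --- exactly one of these occurs, since $a\in\Sigma(b,c)$ by the definition of a triad and $\Sigma(b,c)=\Sigma_{\upY}(b,c)\,\cupdot\,\Sigma_{\mtd}(b,c)$ is a disjoint union --- and define the labels at $b$ and at $c$ analogously. In these terms $(\upY)$ says that all three labels are $\upY$ and $(\mtd)$ says that all three are $\mtd$, so the two alternatives are visibly mutually exclusive and the whole theorem amounts to the assertion that for a triad the three labels are always equal; call a triad \emph{mixed} if they are not. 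The plan is to rule out mixed triads. It will be convenient to call a subset of $\LL$ a \emph{$\upY$-set} if it has the form $x\upY y$ for some distinct incident pair $x,y$, and a \emph{$\mtd$-set} if it has the form $x\mtd y$ for some such pair; by Theorem \ref{z} together with the definitions of $x\upY y$ and $x\mtd y$ one has $[xyz]=x\upY y$ whenever $z\in\Sigma_{\upY}(x,y)$ and $[xyz]=x\mtd y$ whenever $z\in\Sigma_{\mtd}(x,y)$. Because $[abc]=[bca]=[cab]$, a mixed triad $a,b,c$ makes $[abc]$ \emph{simultaneously} a $\upY$-set and a $\mtd$-set: the vertex carrying the label $\upY$ exhibits $[abc]$ as a $\upY$-set after the appropriate cyclic reordering, and the vertex carrying $\mtd$ exhibits it as a $\mtd$-set.

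Thus everything reduces to showing that no $[abc]$ with $a,b,c$ a triad can be both a $\upY$-set and a $\mtd$-set. Suppose $[abc]=u\upY v=s\mtd t$, and use AXIOM [3] to pick a triad $p,q,r$ with $[pqr]\cap[abc]=\emptyset$. Feeding the pair $u,v$ and successively the pairs $p,q$ and $q,r$ and $r,p$ into the $\upY$-clause of AXIOM [4] shows that each of $p\upY q,\,q\upY r,\,r\upY p$ meets $[abc]=u\upY v$. But if $r\in\Sigma_{\upY}(p,q)$ then Theorem \ref{z} gives $p\upY q=[pqr]$, forcing $[abc]\cap[pqr]\neq\emptyset$ and contradicting AXIOM [3]; since $r\in\Sigma(p,q)$ by the triad condition we conclude $r\in\Sigma_{\mtd}(p,q)$, and in the same way $p\in\Sigma_{\mtd}(q,r)$ and $q\in\Sigma_{\mtd}(r,p)$. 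Hence the triad $p,q,r$ carries the label $\mtd$ at all three of its vertices, so $[pqr]=p\mtd q$ by Theorem \ref{z}. Now the $\mtd$-clause of AXIOM [4], applied to the pairs $s,t$ and $p,q$, gives $(s\mtd t)\cap(p\mtd q)\neq\emptyset$, that is $[abc]\cap[pqr]\neq\emptyset$, once more contradicting AXIOM [3]. This contradiction shows that no triad is mixed, which is the theorem; the equivalence of the three conditions within $(\upY)$ (and within $(\mtd)$) is then nothing but the statement that the three labels coincide.

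I expect no serious obstacle beyond bookkeeping: one must check that every pair handed to AXIOM [4] really is a pair of distinct incident lines --- each is a pair of vertices of a triad, so this is automatic --- and that the three cyclic uses of the $\upY$-clause genuinely pin down all three labels of $p,q,r$ rather than only one of them. Conceptually the argument is the higher-dimensional echo of the tension already seen between AXIOM [2.1] and AXIOM [2.2] and flagged after Theorem \ref{Sigma}: AXIOM [3] produces a triad whose bracket-set misses $[abc]$ entirely, whereas AXIOM [4] decrees that two $\upY$-sets always meet and two $\mtd$-sets always meet, so a bracket-set that were of both kinds at once would be forced to meet the very set it misses.
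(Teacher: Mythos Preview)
Your argument is correct and rests on the same mechanism as the paper's --- AXIOM~[3] supplies an auxiliary triad $p,q,r$ with $[pqr]\cap[abc]=\emptyset$, and AXIOM~[4] then forces labels --- but the packaging differs. You argue by contradiction: a mixed triad would make $[abc]$ simultaneously a $\upY$-set and a $\mtd$-set, and you then show this is impossible. The paper is more direct: it simply observes which class $r$ occupies in $\Sigma(p,q)$ and notes that this single datum already forces \emph{every} label of $a,b,c$ to be the opposite (if $r\in\Sigma_{\mtd}(p,q)$ then $[pqr]=p\mtd q$, and any $\mtd$-label on $a,b,c$ would make $[abc]$ a $\mtd$-set meeting $[pqr]$ by AXIOM~[4]). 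Two remarks on your version: first, your three cyclic applications of the $\upY$-clause are more than you need --- one application already yields $r\in\Sigma_{\mtd}(p,q)$ and hence $[pqr]=p\mtd q$, which is all the final step requires; second, the lemma you isolate (no set is at once a $\upY$-set and a $\mtd$-set) is precisely the paper's subsequent Theorem~\ref{pp}, proved there by essentially your argument. So what your route buys is that Theorem~\ref{pp} comes for free; what the paper's route buys is brevity here.
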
 

\begin{proof} 
Let $p, q, r \in \LL$ be a triad with $[p q r] \cap [a b c] = \emptyset$ as provided by AXIOM [3]; of course, $r$ lies in either $\Sigma_{\mtd} (p, q)$ or $\Sigma_{\upY} (p, q)$. If $r \in \Sigma_{\mtd} (p, q)$ then $[p q r] = p \mtd q$ and AXIOM [4] implies that ($\upY$) holds; indeed, the alternative $c \in \Sigma_{\mtd} (a, b)$ would make $[a b c] = a \mtd b$ and the resulting $(a \mtd b) \cap (p \mtd q) = [a b c] \cap [p q r] = \emptyset$ would contradict AXIOM [4]. Likewise, if $r \in \Sigma_{\upY} (p, q)$ then $[p q r] = p \upY q$ and AXIOM [4] implies that ($\mtd$) holds. 
\end{proof} 

Remark: Thus, if $c \in \Sigma_{\upY} (a, b)$ then 
$$[a b c] = b \upY c = c \upY a = a \upY b$$
while if $c \in \Sigma_{\mtd} (a, b)$ then 
$$[a b c] = b \mtd c = c \mtd a = a \mtd b.$$

\medbreak 

In like manner, AXIOM [3] and AXIOM [4] also support the following. 

\begin{theorem} \label{pp}
If $a, b$ and $p, q$ are pairs of distinct incident lines then $a \mtd b \neq p \upY q$. 
\end{theorem}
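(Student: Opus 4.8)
The plan is to run exactly the argument that proves Theorem~\ref{upYmtd}, again playing AXIOM~[3] off against AXIOM~[4]. Suppose toward a contradiction that $a \mtd b = p \upY q$, and write $W$ for this common set. First I would unwind the two descriptions of $W$ by way of the Remark following Theorem~\ref{upYmtd}: choosing $c \in \Sigma_{\mtd}(a, b)$, which is nonempty by Theorem~\ref{class}, makes $a, b, c$ a $\mtd$-triad with $W = a \mtd b = [a b c]$; similarly, choosing $r \in \Sigma_{\upY}(p, q)$ makes $p, q, r$ a $\upY$-triad with $W = p \upY q = [p q r]$. (These descriptions are legitimate because $a \mtd b$ and $p \upY q$ are well defined, by Theorem~\ref{z}.)

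Next I would apply AXIOM~[3] to the triad $a, b, c$ to obtain a triad $x, y, z$ with $[x y z] \cap [a b c] = \emptyset$, that is, with $[x y z] \cap W = \emptyset$. By Theorem~\ref{upYmtd} the triad $x, y, z$ is of exactly one of the two types. If it is a $\mtd$-triad, then $[x y z] = x \mtd y$ by the Remark, and AXIOM~[4] applied to the incident pairs $a, b$ and $x, y$ gives $(a \mtd b) \cap (x \mtd y) \neq \emptyset$, i.e.\ $W \cap [x y z] \neq \emptyset$, contradicting the choice of $x, y, z$. If instead it is a $\upY$-triad, then $[x y z] = x \upY y$, and AXIOM~[4] applied this time to the incident pairs $p, q$ and $x, y$ gives $(p \upY q) \cap (x \upY y) \neq \emptyset$, i.e.\ again $W \cap [x y z] \neq \emptyset$, a contradiction. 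Either way the assumption collapses, so $a \mtd b \neq p \upY q$.

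The one step that carries the proof — and the only place any thought is needed — is the case split on the type of the auxiliary triad $x, y, z$: AXIOM~[3] does not let us select that type, but the hypothesis $W = a \mtd b = p \upY q$ forces $W$ to be simultaneously a $\mtd$-set and a $\upY$-set, so whichever type $x, y, z$ happens to have, the corresponding clause of AXIOM~[4] produces a line lying in both $W$ and $[x y z]$ and defeats the disjointness supplied by AXIOM~[3]. Everything else is routine bookkeeping: that the relevant $\Sigma$-classes are nonempty (Theorem~\ref{class}), that a triad consists of three distinct, pairwise-incident lines so that its pairs are admissible inputs to AXIOM~[4], and that the operations $a \mtd b$ and $p \upY q$ are well defined (Theorem~\ref{z}).
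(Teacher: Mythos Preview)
Your argument is correct and is essentially the paper's own proof: both assume $a\mtd b=p\upY q$, invoke AXIOM~[3] to get a triad $x,y,z$ with $[xyz]$ disjoint from this common set, and then use AXIOM~[4] to force a contradiction. The only cosmetic difference is in the packaging of the final step: you case-split on the type of $x,y,z$ (via Theorem~\ref{upYmtd}) and contradict the disjointness directly, whereas the paper argues contrapositively that the disjointness forces $[xyz]=x\upY y$ and also $[xyz]=x\mtd y$, contradicting the manifest inequality $x\upY y\neq x\mtd y$.
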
 

\begin{proof} 
Suppose $a \mtd b = p \upY q$; say $a \mtd b = [a b c]$ and $p \upY q = [p q r]$. AXIOM [3] furnishes a triad $x, y, z$ such that $[x y z]$ is disjoint from $[a b c] = [p q r]$. Now invoke AXIOM [4]: on the one hand, $[x y z] \cap (a \mtd b) = \emptyset$ implies that $[x y z] = x \upY y$; on the other hand, $[x y z] \cap (p \upY q) = \emptyset$ implies that $[x y z] = x \mtd y$. This contradicts the manifest inequality $x \upY y \neq x \mtd y$. 
\end{proof} 

This completes our list of axioms for projective space, founded only on the notions of line and incidence. In order to make contact with traditional approaches to projective space, we must define our notions of point and plane; we do this in terms of $\upY$ and $\mtd$ in the next section.

\medbreak 

\section{Points and Planes}

\medbreak 

Let $a, b \in \LL$ be an incident pair of distinct lines. We call $a \upY b$ the {\it point} determined by $a$ and $b$; we call $a \mtd b$ the {\it plane} determined by $a$ and $b$. Notice that $a \upY b$ and $a \mtd b$ are defined as subsets of $\LL$. Denote by $\LL_{\upY} \subseteq \mathcal{P} (\LL)$ the set of all such points and by $\LL_{\mtd} \subseteq \mathcal{P} (\LL)$ the set of all such planes. Notice that the sets $\LL_{\upY}$ and $\LL_{\mtd}$ are disjoint: points and planes are different, as we saw in Theorem \ref{pp}. 

\medbreak 

Before proceeding further, we note at once that the principle of duality holds automatically in the following form, for theorems derived from AXIOM [1] - AXIOM [4].  

\medbreak 

\noindent 
{\bf Duality Principle.} {\it The result of interchanging $\upY$ and $\mtd$ in any theorem is itself a theorem.}

\medbreak 

No proof is called for: $\upY$ and $\mtd$ only appear in AXIOM [4] where they appear symmetrically; the axioms are `self-dual'. 
In terms of our derived notions of point and plane, the principle of duality holds in its traditional form: the result of interchanging point and plane in any theorem will also be a theorem. 

\medbreak 

For the remainder of this section, we shall present and discuss results with reference to $\upY$ and $\mtd$ rather than with reference to points and planes. The implicit interpretations in terms of points and planes should at all times be clear; in any case, we shall explore sample point-plane interpretations carefully in the next section.  It is perhaps not entirely out of place here to mention that the symbol $\upY$ was chosen to suggest three (non-coplanar) lines meeting in a point and forming a {\it tripod}; the symbol $\mtd$ was chosen to represent three (non-copunctal) lines lying in a plane and forming a {\it trigon}. 

\begin{theorem}
If $l \in \LL$ then $\{ l \}^{\da \da} = \{ l \}$. 
\end{theorem}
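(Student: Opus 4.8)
The plan is to establish the non-trivial inclusion $\{ l \}^{\da \da} \subseteq \{ l \}$; the reverse inclusion $\{ l \} \subseteq \{ l \}^{\da \da}$ is one of the Galois properties already recorded. So let $m \in \{ l \}^{\da \da}$. Since $l \da l$ by reflexivity, $l$ itself lies in $\{ l \}^{\da} = l^{\da}$, and therefore $m \da l$. I argue by contradiction, so assume henceforth that $m \neq l$.

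Then $l$ and $m$ are distinct incident lines, and the hypothesis $m \in \{ l \}^{\da \da}$ says exactly that $l^{\da} \subseteq m^{\da}$; hence $l^{\da} = l^{\da} \cap m^{\da} = \{ l, m \}^{\da} = [ l m ]$. Now AXIOM [1] supplies a skew triple $u, v, w$ inside $l^{\da} = [ l m ]$. The pair $x := u$, $y := v$ is then a skew pair lying in $[ l m ]$, and moreover $x, y \in \Sigma (l, m)$: a line belonging to $\{ l, m \}^{\da \da}$ would have to be incident to every line of $[ l m ]$, in particular to its own skew partner, which is absurd. Consequently AXIOM [2.3] (in the form of the remark immediately following it) gives the decomposition $[ l m ] = [ l m x ] \cup [ l m y ]$, while AXIOM [2.2] shows that each of $[ l m x ]$ and $[ l m y ]$ is pairwise incident.

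The contradiction is now a pigeonhole count: the three pairwise skew lines $u, v, w$ all lie in $[ l m ] = [ l m x ] \cup [ l m y ]$, so two of them lie in the same one of these two parts; but that part is pairwise incident, so those two lines are incident --- contradicting that they belong to a skew triple. Hence $m = l$, which proves $\{ l \}^{\da \da} \subseteq \{ l \}$ and therefore $\{ l \}^{\da \da} = \{ l \}$. The single point on which the argument turns --- and the only place where one might be delayed --- is the observation that AXIOM [2.3] and AXIOM [2.2] together present $[ l m ]$ as a union of two pairwise-incident sets, which then cannot house the skew triple that AXIOM [1] requires to sit inside $l^{\da}$; this is one more manifestation of the tension between AXIOM [2.1]/AXIOM [2.2] flagged after Theorem \ref{Sigma}. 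The rest is routine manipulation of the Galois operation.
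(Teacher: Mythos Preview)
Your proof is correct and follows essentially the same line as the paper's first argument: use AXIOM~[1] to place a skew triple inside $l^{\da}$, then observe via AXIOM~[2.3] that such a triple cannot sit inside $[lm]$. The paper reaches the contradiction a touch more directly (from $z \in x^{\da} \cup y^{\da}$, without detouring through AXIOM~[2.2] and pigeonhole) and also records a second, independent argument via Theorem~\ref{Sigma}.
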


\begin{proof} 
The containment $\supseteq$ is clear. Now, let $l \neq m \in \LL$; we must find $l' \da l$ such that $l' \: | \: m$. When $l \: | \: m$ we may take $l' = l$ itself. When $l \da m$ we offer two arguments. (i) Take a skew triad $\{ x, y, z \} \subseteq l^{\da}$ from AXIOM [1]: if $x \da m$ and $y \da m$ then $x, y \in \Sigma (l, m)$ is a skew pair and we may let $l' = z$; indeed, $z \da m$ would force $z \in x^{\da} \cup y^{\da}$ by AXIOM [2.3]. (ii) Choose $n \in \Sigma (l, m)$ by AXIOM [2.1] whence $m \in \Sigma (l, n)$ by Theorem \ref{Sigma}; then choose any partner $l' \in \Sigma (l, n)$ to make $l' , m$  a skew pair. 
\end{proof} 

\begin{theorem} \label{skew} 
Let the distinct lines $u, v, w$ be pairwise skew. If $m$ and $n$ are distinct lines in $[ u v w ]$ then $m$ and $n$ are skew. 
\end{theorem}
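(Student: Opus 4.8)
The plan is to argue by contradiction. Suppose $m$ and $n$ are distinct lines in $[ u v w ]$ with $m \da n$, and derive a violation of AXIOM [2.3]. The first step is the elementary observation that each of $u, v, w$ lies in $[ m n ] = m^{\da} \cap n^{\da}$: by definition, $m, n \in [ u v w ]$ means precisely that $m$ and $n$ are each incident to all three of $u, v, w$, so by symmetry of $\da$ each of $u, v, w$ is incident to both $m$ and $n$. (Note in passing that $m$ and $n$ are then genuinely distinct \emph{incident} lines, so $[ m n ]$ and the attendant notation are legitimate; one also sees $m, n \notin \{u, v, w\}$, since e.g.\ $m = u$ would force $u \da v$.)

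Next, since $u$ and $v$ are skew and both belong to $[ m n ]$, the pair $u, v$ is a skew pair in $[ m n ]$. AXIOM [2.3] applied to the incident pair $m, n$ and this skew pair then yields $m^{\da} \cap n^{\da} \subseteq u^{\da} \cup v^{\da}$. But $w \in m^{\da} \cap n^{\da}$, so $w \in u^{\da}$ or $w \in v^{\da}$; that is, $w$ is incident to $u$ or to $v$, contradicting the hypothesis that $u, v, w$ are pairwise skew.

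There is essentially no obstacle here: the only point requiring a moment's care is the verification that $u, v, w \in [ m n ]$, which is just unwinding the definition of $[ u v w ]$ together with the symmetry of $\da$. (An alternative route avoids AXIOM [2.3] and uses Theorem \ref{class} instead: if $m \da n$, then each of $u, v, w$ lies in $\Sigma (m, n)$, since each lies in $[ m n ]$ but fails to lie in $[ m n ]^{\da}$, being skew to one of the other two; as $\Sigma (m, n)$ has only two incidence classes, two of $u, v, w$ would then share a class and hence be incident, again a contradiction.)
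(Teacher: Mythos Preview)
Your proof is correct. Both your argument and the paper's proceed by contradiction from $m \da n$, observe that $\{u,v,w\} \subseteq [mn]$, and rely on AXIOM~[2.3]. The paper applies [2.3] in the reformulated form $[mn] = (m \upY n) \cup (m \mtd n)$, uses pigeonhole to place two of $u,v,w$ in the same part, and then invokes AXIOM~[2.2] to obtain the contradiction; you instead apply [2.3] directly with the specific skew pair $u,v \in [mn]$ to force $w \in u^{\da} \cup v^{\da}$. Your route is slightly more economical, as it avoids the $\upY/\mtd$ machinery and the separate appeal to AXIOM~[2.2]; your parenthetical alternative via Theorem~\ref{class} is also valid and is in fact closer in spirit to the paper's pigeonhole step.
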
 

\begin{proof} 
Suppose $m \da n$: note that 
$$\{ u, v, w \} \subseteq m^{\da} \cap n^{\da} \subseteq (m \upY n) \cup (m \mtd n)$$
where the second inclusion is provided by AXIOM [2.3]; it follows that either $m \upY n$ or $m \mtd n$ contains at least two of $u, v, w$ contrary to AXIOM [2.2]. 
\end{proof} 

Remark: This has the nature of a skew version of the Pasch property. Traditionally, $[ u v w ]$ is a {\it regulus}.  

\begin{theorem} \label{closed}
If $a, b, c \in \LL$ is a triad then $[ a b c ]^{\da} = [ a b c ]$. 
\end{theorem}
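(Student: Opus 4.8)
The plan is to establish the two inclusions $[abc]\subseteq[abc]^{\da}$ and $[abc]^{\da}\subseteq[abc]$ separately, keeping in mind that $[abc]=\{a,b,c\}^{\da}$ by definition and that a triad consists of three \emph{distinct} pairwise-incident lines.

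For the inclusion $[abc]^{\da}\subseteq[abc]$, I would first note that, since $a,b,c$ are pairwise incident and $\da$ is reflexive and symmetric, each of $a,b,c$ is incident to all of $a,b,c$; hence $a,b,c\in\{a,b,c\}^{\da}=[abc]$. Consequently, any line $m$ incident to every member of $[abc]$ is in particular incident to $a$, to $b$, and to $c$, so $m\in\{a,b,c\}^{\da}=[abc]$. This half is purely formal and uses nothing beyond pairwise incidence of the triad together with the Galois bookkeeping.

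For the reverse inclusion $[abc]\subseteq[abc]^{\da}$, I would invoke AXIOM [2.2]. Because $a,b,c$ is a triad we have $c\in\Sigma(a,b)=\{a,b\}^{\da}\setminus\{a,b\}^{\da\da}$, so AXIOM [2.2] (with $z=c$) says that $[abc]$ contains no skew pairs; combined with reflexivity of $\da$, this means that any two lines of $[abc]$ are incident. Thus every line of $[abc]$ is incident to every line of $[abc]$, i.e. $[abc]\subseteq[abc]^{\da}$. The two inclusions then give $[abc]^{\da}=[abc]$.

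I expect no genuine obstacle here. The only real content lies in the second inclusion, where the self-dual Pasch axiom AXIOM [2.2] is exactly the tool that forbids skew pairs inside $[abc]$; the triad hypothesis is essential at this point, since for a generic incident pair $a,b$ the set $[ab]$ \emph{does} contain skew pairs by AXIOM [2.1]. The first inclusion costs nothing once one observes that $a,b,c$ themselves belong to $[abc]$.
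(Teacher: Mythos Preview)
Your proof is correct and matches the paper's own argument essentially line for line: both establish $[abc]^{\da}\subseteq[abc]$ from $\{a,b,c\}\subseteq[abc]$ via the Galois bookkeeping, and both obtain $[abc]\subseteq[abc]^{\da}$ by invoking AXIOM~[2.2] (with $z=c\in\Sigma(a,b)$) to see that $[abc]$ is pairwise incident. The only cosmetic difference is that the paper phrases the first inclusion as $\{a,b,c\}\subseteq\{a,b,c\}^{\da}\Rightarrow\{a,b,c\}^{\da\da}\subseteq\{a,b,c\}^{\da}$, while you unpack this elementwise.
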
 

\begin{proof} 
From $\{ a, b, c \} \subseteq \{ a, b, c \}^{\da}$ it follows that 
$$[ a b c ]^{\da} =  \{ a, b, c \}^{\da \da} \subseteq  \{ a, b, c \}^{\da} = [ a b c ].$$
In the opposite direction, AXIOM [2.2] ensures that each $l \in [ a b c ]$ meets each $m \in [ a b c ]$ so 
$$[ a b c ] \subseteq [ a b c ]^{\da}.$$
\end{proof} 

Thus, $(a \upY b)^{\da} = a \upY b$ and $(a \mtd b)^{\da} = a \mtd b$. 

\medbreak 

\begin{theorem} 
If $a$ and $b$ are two incident lines then $(a \upY b) \cap (a \mtd b) = [ a b ]^{\da}.$
\end{theorem}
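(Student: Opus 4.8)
The plan is to prove the two inclusions separately. Recall that by definition $a \upY b = [abc_{\upY}]$ for any $c_{\upY} \in \Sigma_{\upY}(a,b)$ and $a \mtd b = [abc_{\mtd}]$ for any $c_{\mtd} \in \Sigma_{\mtd}(a,b)$, and that both of these are subsets of $[ab] = a^{\da} \cap b^{\da}$.

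For the inclusion $[ab]^{\da} \subseteq (a \upY b) \cap (a \mtd b)$, I would start from the Galois inclusions. Since $\{a,b\} \subseteq [ab]$ we get $[ab]^{\da} \subseteq \{a,b\}^{\da} = [ab]$. More to the point, both $c_{\upY}$ and $c_{\mtd}$ lie in $[ab]$, so any $l \in [ab]^{\da}$ is incident to $c_{\upY}$ and to $c_{\mtd}$; being also incident to $a$ and $b$, such an $l$ lies in $[abc_{\upY}] = a \upY b$ and in $[abc_{\mtd}] = a \mtd b$. Hence $[ab]^{\da} \subseteq (a \upY b) \cap (a \mtd b)$.

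For the reverse inclusion $(a \upY b) \cap (a \mtd b) \subseteq [ab]^{\da}$, take $l \in (a \upY b) \cap (a \mtd b)$. I must show $l$ is incident to every $m \in [ab]$. By AXIOM [2.3] in the form $[ab] = (a \upY b) \cup (a \mtd b)$, such an $m$ lies in $a \upY b$ or in $a \mtd b$. If $m \in a \upY b = [abc_{\upY}]$, then since $l \in a \upY b$ as well, AXIOM [2.2] (applied with the incident pair $a,b$ and the witness $c_{\upY} \in \Sigma(a,b)$) gives $l \da m$; symmetrically if $m \in a \mtd b$ we use the witness $c_{\mtd}$. Either way $l \da m$, so $l \in [ab]^{\da}$.

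The main thing to be careful about is the bookkeeping around AXIOM [2.2]: it requires a genuine skew partner $z \in \Sigma(a,b)$, and one must make sure that $c_{\upY}$ (resp. $c_{\mtd}$) really is such a witness and that $[abc_{\upY}]$ (resp. $[abc_{\mtd}]$) contains both $l$ and $m$. This is exactly what the definitions of $a \upY b$ and $a \mtd b$ via Theorem \ref{z} guarantee, and AXIOM [2.1] guarantees the relevant classes are nonempty, so no real obstacle arises; the argument is essentially a repackaging of Theorem \ref{z} together with the decomposition $[ab] = (a \upY b) \cup (a \mtd b)$.
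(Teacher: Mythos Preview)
Your proof is correct and follows essentially the same two-inclusion strategy as the paper. The forward inclusion $(a \upY b)\cap(a \mtd b)\subseteq [ab]^{\da}$ is handled identically, via the decomposition $[ab]=(a\upY b)\cup(a\mtd b)$ from AXIOM~[2.3] and then AXIOM~[2.2]. For the reverse inclusion the paper argues instead from Theorem~\ref{closed}: since $a\upY b\subseteq[ab]$, one gets $[ab]^{\da}\subseteq(a\upY b)^{\da}=a\upY b$; your argument, observing directly that $\{a,b,c_{\upY}\}\subseteq[ab]$ forces $[ab]^{\da}\subseteq[abc_{\upY}]$, reaches the same conclusion without invoking Theorem~\ref{closed} and is if anything slightly more elementary.
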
 

\begin{proof} 
The reverse inclusion $\supseteq$ follows from Theorem \ref{closed}: $a \upY b \subseteq [ a b ]$ so $[ a b ]^{\da} \subseteq (a \upY b)^{\da} = a \upY b$ and similiarly with $\upY$ relaced by $\mtd$. In the opposite direction, let $l \in (a \upY b) \cap (a \mtd b)$: if $m \in [ a b ]$ then $m \in (a \upY b) \cup (a \mtd b)$ by AXIOM [2.3] whence $l \da m$ by AXIOM [2.2]. 
\end{proof} 

Remark: It may be checked that $\Sigma_{\upY} (a, b) = (a \upY b) \setminus  [ a b ]^{\da}$ and $\Sigma_{\mtd} (a, b) = (a \mtd b) \setminus  [ a b ]^{\da}$. 

\begin{theorem} \label{coh} 
Let $[ a b c ] = [ p q r ]$. If $a, b, c$ is a triad then so is $p, q, r$ and conversely. 
\end{theorem}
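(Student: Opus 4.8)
The plan is to extract from the hypothesis only two features of the triad $a,b,c$: by Theorem~\ref{closed} the set $[abc]$ is \emph{closed}, i.e. $[abc]^{\da}=[abc]$, and since $c\in\Sigma(a,b)$, AXIOM~[2.2] says that $[abc]$ contains \emph{no skew pair}. Because $[pqr]=[abc]$, the set $[pqr]$ inherits both of these properties, and from them alone I will recover that $p,q,r$ is a triad. Once the forward implication is in hand, the converse is automatic, since the hypothesis $[abc]=[pqr]$ is symmetric in the two triples.

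First I would establish that $p,q,r$ are pairwise incident. Using Theorem~\ref{closed}, $[pqr]=[abc]=[abc]^{\da}=[pqr]^{\da}=\{p,q,r\}^{\da\da}$, so the Galois inclusion $\{p,q,r\}\subseteq\{p,q,r\}^{\da\da}$ yields $\{p,q,r\}\subseteq[pqr]$; as $[pqr]$ contains no skew pair, any two distinct members of $\{p,q,r\}$ are incident. Next I would dispose of the degenerate coincidences. If, say, $p=q$, then $[pqr]$ equals $[pr]$ when $p\neq r$ and equals $p^{\da}$ when $p=q=r$; the former contains a skew pair by AXIOM~[2.1] (note $p\da r$ by the previous step), the latter a skew triple by AXIOM~[1]. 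Either conclusion contradicts the absence of skew pairs in $[pqr]=[abc]$, so $p,q,r$ are pairwise distinct.

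It then remains to verify the triad condition, say $r\in\Sigma(p,q)$. From $\{p,q\}\subseteq\{p,q,r\}$ we get $[pqr]\subseteq[pq]$, so $r\in\{p,q,r\}\subseteq[pqr]\subseteq[pq]$; thus I only need $r\notin[pq]^{\da}$. If instead $r\in[pq]^{\da}=\{p,q\}^{\da\da}$, then $\{p,q,r\}\subseteq\{p,q\}^{\da\da}$, hence $[pqr]=\{p,q,r\}^{\da}\supseteq\{p,q\}^{\da\da\da}=[pq]$; but $p\neq q$ are distinct and incident, so AXIOM~[2.1] puts a skew pair into $[pq]\subseteq[pqr]=[abc]$, contradicting the second feature above. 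Hence $r\in\Sigma(p,q)$, which together with pairwise incidence is precisely the statement that $p,q,r$ is a triad. The converse follows by interchanging $(a,b,c)$ and $(p,q,r)$.

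The main obstacle here is not conceptual — every step is a short chase through the Galois properties — but organizational: one must remember to rule out the degenerate cases (two of $p,q,r$ coinciding, or a priori skew) before invoking $\Sigma$, since $\Sigma(p,q)$ is only meaningful for distinct incident $p,q$. The argument is, in effect, yet another instance of the tension between AXIOM~[2.1] and AXIOM~[2.2] already flagged in the paper.
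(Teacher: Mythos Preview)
Your argument is correct and follows essentially the same route as the paper's own proof: use Theorem~\ref{closed} to place $\{p,q,r\}$ inside $[abc]=[pqr]$, invoke AXIOM~[2.2] for pairwise incidence, rule out coincidences via AXIOM~[1]/AXIOM~[2.1], and then derive $r\in\Sigma(p,q)$ by the $[pq]\subseteq[pqr]$ contradiction with AXIOM~[2.1] versus AXIOM~[2.2]. Your write-up is slightly more explicit than the paper's in checking that $p\da r$ before applying AXIOM~[2.1] in the degenerate case $p=q\neq r$, which is a welcome clarification.
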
 

\begin{proof} 
Assume that $c \in \Sigma (a, b)$. By Theorem \ref{closed} it follows that 
$$[ a b c ] = [ a b c ]^{\da} = [ p q r ]^{\da} = \{ p, q, r \}^{\da \da} \supseteq \{ p, q, r \}$$
whence $p, q, r$ are (pairwise) incident by AXIOM [2.2]. Furthermore $p, q, r$ are distinct: any coincidence among them would by AXIOM [1] or AXIOM [2.1] force a skew pair in $[ a b c ]$ and so contradict AXIOM [2.2]. If $r \notin \Sigma (p, q)$ then $r \in \{ p, q \}^{\da \da}$ so that $\{ p, q, r \} \subseteq \{ p, q \}^{\da \da}$ and therefore 
$$[ p q ] = \{ p, q \}^{\da} = \{p, q \}^{ \da \da \da } \subseteq \{ p, q, r \}^{\da} = [ p q r ] = [ a b c ];$$
as $[ p q ]$ contains skew pairs by AXIOM [2.1] while $[ a b c ]$ does not by AXIOM [2.2] we have arrived at a contradiction, so $r \in \Sigma (p, q)$. The converse falls to symmetry. 
\end{proof} 

In an opposite direction we have the following result. 

\begin{theorem} \label{abcpqr} 
The conditions $p, q, r \in [ a b c ]$ and $a, b, c \in [ p q r ]$ are equivalent; when $a, b, c$ and $p, q, r$ are triads they imply $[ a b c ] = [ p q r ].$ 
\end{theorem}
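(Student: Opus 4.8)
The plan is to treat the equivalence and the final implication separately, both via the Galois formalism already set up.

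For the equivalence, I would simply unwind definitions. Saying $p,q,r \in [abc]$ means exactly $\{p,q,r\} \subseteq \{a,b,c\}^{\da}$. Applying $\da$ (which is inclusion‑reversing) and using the standard property $S \subseteq S^{\da\da}$ gives
$$\{a,b,c\} \subseteq \{a,b,c\}^{\da\da} \subseteq \{p,q,r\}^{\da} = [pqr],$$
that is, $a,b,c \in [pqr]$. Since this argument is symmetric in the roles of $(a,b,c)$ and $(p,q,r)$, the reverse implication is the same argument with the two triples interchanged, and the equivalence follows. No axioms beyond the Galois properties are needed here.

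For the second assertion, assume in addition that $a,b,c$ and $p,q,r$ are triads. Then Theorem~\ref{closed} applies to each, giving $[abc]^{\da} = [abc]$ and $[pqr]^{\da} = [pqr]$. From $\{p,q,r\} \subseteq [abc]$ (the hypothesis, as unwound above) I apply $\da$ to obtain
$$[abc] = [abc]^{\da} \subseteq \{p,q,r\}^{\da} = [pqr];$$
and from $\{a,b,c\} \subseteq [pqr]$ (the equivalent hypothesis) the same step yields $[pqr] \subseteq [abc]$. Hence $[abc] = [pqr]$.

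I do not anticipate a serious obstacle: the whole argument is a two‑line Galois computation once Theorem~\ref{closed} is invoked. The only point requiring care is to notice that Theorem~\ref{closed} needs precisely the triad hypothesis, which is exactly what is granted for the final implication, and that without that hypothesis the containments $[abc]^{\da}=[abc]$ etc. can fail, so the equivalence part genuinely cannot be upgraded to an equality of the sets $[abc]$ and $[pqr]$ in general.
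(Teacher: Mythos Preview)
Your proof is correct and matches the paper's argument essentially line for line: the paper dispatches the equivalence with the phrase ``evident from the symmetry of incidence'' (which your Galois computation unpacks), and for the equality it applies Theorem~\ref{closed} to each triad exactly as you do to obtain the two containments $[abc]\subseteq[pqr]$ and $[pqr]\subseteq[abc]$.
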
 

\begin{proof} 
The equivalence of $p, q, r \in [ a b c ]$ and $a, b, c \in [ p q r ]$ is evident from the symmetry of incidence. By Theorem \ref{closed},  $a, b, c$ a triad and $p, q, r \in [ a b c ]$ imply $[ p q r ] \supseteq [ a b c ]^{\da} = [ a b c ]$ while $p, q, r$ a triad and $a, b, c \in [ p q r ]$ imply $[ a b c ] \supseteq [ p q r ]^{\da} = [ p q r ].$
\end{proof} 

The next theorem exhibits what amounts to an exchange property. 

\begin{theorem} \label{exch} 
Let $a, b, c \in \LL$ be a triad. If $x, y \in [ a b c ]$ are distinct, then $\Sigma (x, y)$ contains at least one of $a, b, c$. 
\end{theorem}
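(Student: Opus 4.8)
The plan is to reduce the statement to the now-familiar tension between AXIOM [2.1] and AXIOM [2.2], the same mechanism that drives Theorem \ref{Sigma} and Theorem \ref{coh}.

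First I would record two preliminaries. Since $a,b,c$ is a triad, AXIOM [2.2] makes $[abc]$ pairwise incident (this is precisely the inclusion $[abc]\subseteq[abc]^{\da}$ established in the proof of Theorem \ref{closed}); in particular $x\da y$, so, $x$ and $y$ being distinct, the set $\Sigma(x,y)$ is well-formed. Moreover each of $x,y$ is incident to each of $a,b,c$, hence by symmetry of incidence $a,b,c\in\{x,y\}^{\da}=[xy]$. Now $\Sigma(x,y)=[xy]\setminus[xy]^{\da}=\{x,y\}^{\da}\setminus\{x,y\}^{\da\da}$, and $a,b,c$ already lie in $[xy]$; so the assertion that at least one of $a,b,c$ lies in $\Sigma(x,y)$ amounts to the assertion that at least one of $a,b,c$ fails to lie in $\{x,y\}^{\da\da}=[xy]^{\da}$.

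So I would argue by contradiction, supposing $a,b,c$ all lie in $\{x,y\}^{\da\da}$. Then $\{a,b,c\}\subseteq\{x,y\}^{\da\da}$, and the Galois properties give
$$[xy]=\{x,y\}^{\da}=\{x,y\}^{\da\da\da}\subseteq\{a,b,c\}^{\da}=[abc],$$
that is, $[xy]\subseteq[abc]$. Now AXIOM [2.1] supplies a skew pair $s,t$ in $[xy]$ (legitimate since $x\neq y$ are incident); but $s,t\in[xy]\subseteq[abc]$ and $[abc]$ is pairwise incident, forcing $s\da t$ and contradicting the skewness of $s,t$. Hence at least one of $a,b,c$ lies in $\Sigma(x,y)$.

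I do not anticipate a genuine obstacle: the whole argument is the one-line Galois inclusion $[xy]\subseteq[abc]$ extracted from the negation, followed by the standard clash of AXIOM [2.1] against AXIOM [2.2]. The two points to watch are that $\Sigma(x,y)$ is genuinely well-formed (needs $x\da y$, via Theorem \ref{closed}) and that the reduction to $\{x,y\}^{\da\da}$-membership is made correctly (needs $a,b,c\in[xy]$); both are routine. A closing remark could record the exchange flavour: when, say, $a\in\Sigma(x,y)$, the triple $x,y,a$ is itself a triad, and since $x,y,a\in[abc]$ Theorem \ref{abcpqr} then yields $[xya]=[abc]$ — so any two distinct lines of $[abc]$, together with a suitable one of $a,b,c$, again determine $[abc]$.
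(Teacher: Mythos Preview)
Your argument is correct and is essentially the paper's own proof: note $x\da y$ by AXIOM [2.2], assume $a,b,c\notin\Sigma(x,y)$ so that $\{a,b,c\}\subseteq\{x,y\}^{\da\da}$, derive $[xy]\subseteq[abc]$ by the Galois properties, and obtain the contradiction between AXIOM [2.1] (skew pair in $[xy]$) and AXIOM [2.2] (no skew pair in $[abc]$). Your closing remark on the exchange flavour also matches the paper's subsequent refinement.
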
 

\begin{proof} 
Note at once that $x \da y$ by AXIOM [2.2]. Suppose none of $a, b, c$ lies in $\Sigma (x, y)$;  as $a, b, c \in \{ x, y \}^{\da}$ already, $\{ a, b, c \} \subseteq \{ x, y \}^{\da \da}$ and therefore 
$$[ a b c ] = \{ a, b, c \}^{\da} \supseteq \{ x, y \}^{\da \da \da} = \{ x, y \}^{\da} = [ x y ].$$
We have a contradiction: on the one hand, $[ x y ]$ contains skew pairs by AXIOM [2.1]; on the other hand, $[ a b c ]$ contains none by AXIOM [2.2]. 
\end{proof} 

This refines: if $[a b c] = a\mtd b$ then $\Sigma_{\mtd} (x, y)$ contains at least one of $a, b, c$. To see this, say $c \in \Sigma (x, y)$ by Theorem \ref{exch}; then $[x y c] = [a b c]$ by Theorem \ref{abcpqr} and $c \in \Sigma_{\mtd} (x, y)$ by an application of AXIOM [3] and AXIOM [4] as in the proof of Theorem \ref{upYmtd}.  

\medbreak Recall from Theorem \ref{pp} that points and planes are different. We can say more.  

\begin{theorem} 
$(a \mtd b) \cap (p \upY q)$ is not a singleton. 
\end{theorem}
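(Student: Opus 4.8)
The plan is to show that a \emph{nonempty} intersection $(a\mtd b)\cap(p\upY q)$ must contain at least two lines; since the empty set is also not a singleton, this suffices. Write $E=a\mtd b$ and $P=p\upY q$, and suppose $l\in E\cap P$; the goal is to produce a second line $l^{\ast}\in E\cap P$. The workhorse is a \emph{regeneration} principle already implicit in the remark after Theorem~\ref{exch}: any two distinct lines $x,y$ of a plane $a'\mtd b'$ satisfy $x\mtd y=a'\mtd b'$ (apply the $\mtd$-refinement of Theorem~\ref{exch} together with Theorem~\ref{abcpqr}), and, dually, any two distinct lines of a point $Q$ have $\upY$-join equal to $Q$. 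Two consequences will be used repeatedly: two distinct planes share at most one line (two shared lines would regenerate, hence identify, the planes), and likewise two distinct points share at most one line.

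First I would pick a line $m\in P\setminus\{l\}$, which is possible since $p,q\in P$. If $m\in E$ we are done, because then $m$ and $l$ are distinct lines of $E\cap P$; so assume $m\notin E$. By regeneration $l\upY m=P$, so the remark following the identity $(a\upY b)\cap(a\mtd b)=[ab]^{\da}$ yields $\Sigma_{\upY}(l,m)=P\setminus[lm]^{\da}$; by AXIOM~[2.1] a skew pair of $[lm]$ puts one line into each of the two incidence classes of $\Sigma(l,m)$ (Theorem~\ref{class}), so $\Sigma_{\upY}(l,m)\ne\emptyset$ and I may choose $m'\in\Sigma_{\upY}(l,m)$. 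Then $m'\in P$ and $m'\notin\{l,m\}$. The key point is that $l\notin m\mtd m'$: otherwise $l$ and $m$ would be distinct lines of the plane $m\mtd m'$, so $m\mtd m'=l\mtd m$ by regeneration, and then $m'\in m\mtd m'=l\mtd m$, contradicting $m'\in\Sigma_{\upY}(l,m)=(l\upY m)\setminus[lm]^{\da}$, which is disjoint from $l\mtd m$.

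Now I apply AXIOM~[4] to the incident pairs $a,b$ and $m,m'$, so that $E\cap(m\mtd m')$ is nonempty. Since $l\in E$ while $l\notin m\mtd m'$, the planes $E$ and $m\mtd m'$ are distinct, hence $E\cap(m\mtd m')=\{l^{\ast}\}$ for a single line $l^{\ast}$, which is distinct from $l$ because $l\notin m\mtd m'$. It remains to check $l^{\ast}\in P$. By Theorem~\ref{closed} both $E$ and $m\mtd m'$ equal their own $\da$, so $l^{\ast}$ is incident to $l$ (both lie in $E$) and to $m$ (both lie in $m\mtd m'$); therefore $l^{\ast}\in[lm]=(l\upY m)\cup(l\mtd m)=P\cup(l\mtd m)$, using AXIOM~[2.3] and $l\upY m=P$. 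If $l^{\ast}\in l\mtd m$, then $l^{\ast}$ and $l$ would be distinct lines shared by the planes $E$ and $l\mtd m$, forcing $E=l\mtd m$ and hence $m\in E$, contrary to assumption. So $l^{\ast}\in P$, and $l^{\ast}\in(E\cap P)\setminus\{l\}$, as desired.

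The step I expect to be the main obstacle is the last one, extracting $l^{\ast}\in P$: the hypothesis records only through the auxiliary lines $l$ and $m$ that the two planes $E$ and $m\mtd m'$ both ``pass through'' the point $P$, and this information has to be transferred to their common line $l^{\ast}$; the device is to place $l^{\ast}$ in the pencil $[lm]$ and then rule out the spurious branch $l^{\ast}\in l\mtd m$ by the impossibility of two distinct planes sharing two lines. A secondary delicate point is the choice of $m'$ making $l\notin m\mtd m'$, which is exactly what keeps $E$ and $m\mtd m'$ distinct, so that $E\cap(m\mtd m')$ is a single line rather than all of $E$.
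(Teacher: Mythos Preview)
Your argument is correct. It differs from the paper's in an interesting, essentially dual way. The paper first uses the refined exchange to arrange that $l$ itself appears in the defining triads on both sides, so that $a\mtd b=[abl]$ and $p\upY q=[pql]$; it then chooses $m\in\Sigma_{\upY}(a,b)$ and applies AXIOM~[4] on the \emph{point} side, taking $n\in(a\upY b)\cap(p\upY q)=[abm]\cap[pql]$. Membership $n\in a\mtd b$ is then automatic (since $n$ is incident to $a,b,l$), and $n\neq l$ because $n\da m$ while $l\,|\,m$. Your route instead regenerates $P=l\upY m$, builds an auxiliary \emph{plane} $m\mtd m'$ through $m$ that avoids $l$, and applies AXIOM~[4] on the plane side to produce $l^{\ast}\in E\cap(m\mtd m')$; you then need the extra step of placing $l^{\ast}$ in the pencil $[lm]$ and eliminating the branch $l^{\ast}\in l\mtd m$. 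The paper's reduction buys a shorter endgame (no branch to rule out), while your construction is more explicitly geometric and, in passing, anticipates Theorem~\ref{equal} (which in the paper is the \emph{next} result) as a corollary of your regeneration principle. Both arguments rest on the same refined exchange lemma; they simply invoke AXIOM~[4] on opposite sides of the $\upY$--$\mtd$ divide.
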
 

\begin{proof} 
Say $a \mtd b = [ a b c ]$ with $c \in \Sigma_{\mtd} (a, b)$ and $p \upY q = [ p q r ]$ with $r \in \Sigma_{\upY} (p, q)$. Assume that the stated intersection is nonempty: say $ l \in (a \mtd b) \cap (p \upY q)$; we shall show that the intersection then contains another line. We may suppose that each of the triples $a, b, c$ and $p, q, r$ already contains $l$ as follows. If $l \notin \{ a, b, c \}$ then (by Theorem \ref{exch} as subsequently refined) $\Sigma_{\mtd} (a, l)$ contains at least one of $a, b, c$ (other than $a$ of course); say it contains $b$ so that $l \in \Sigma_{\mtd} (a, b)$ and $a \mtd b = [ a b l ]$ by Theorem \ref{upYmtd}. Similarly, if $l \notin \{ p, q, r \}$ then we may replace $r$ (say) by $l$ so that $p \upY q = [ p q l ]$. Now choose $m \in \Sigma_{\upY} (a, b)$. AXIOM [4] ensures that $(a \upY b) \cap (p \upY q)$ is nonempty: say 
$$n \in [ a b m ] \cap [ p q l ] = \{ a, b, m, p, q, l \}^{\da}.$$ 
Plainly, $n \in \{ a, b, l \}^{\da} = [ a b l ] = a \mtd b$ and $n \in [ p q l ] = p \upY q$; plainly also $n \neq l$ because $n \in [ a b m ]$ and $m$ are incident but $l \in \Sigma_{\mtd} (a, b)$ and $m \in \Sigma_{\upY} (a, b)$ are skew.
\end{proof} 

\begin{theorem} \label{equal}
If $(a \mtd b) \cap (p \mtd q)$ contains more than one line then $a \mtd b = p \mtd q.$
\end{theorem}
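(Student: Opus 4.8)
The plan is to show that any two distinct lines common to $a \mtd b$ and $p \mtd q$ already recover each of these sets, so that the two sets must coincide; put differently, a plane is reconstructed as $x \mtd y$ from any two of the lines $x, y$ it contains. The main tool will be the refined form of Theorem \ref{exch} recorded immediately above it, together with Theorem \ref{abcpqr}.

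Concretely, I would start from distinct lines $x, y \in (a \mtd b) \cap (p \mtd q)$. Writing $a \mtd b = [a b c]$ with $c \in \Sigma_{\mtd}(a, b)$, the triple $a, b, c$ is a triad and $[a b c] = a \mtd b$. Since the lines of $[a b c]$ are pairwise incident (AXIOM [2.2]), we have $x \da y$, so $x \mtd y$ is defined. Now $x, y$ are distinct lines of $[a b c]$, so the refined Theorem \ref{exch} puts one of $a, b, c$ — say $c$, after relabelling — in $\Sigma_{\mtd}(x, y)$. Then $x, y, c$ is a triad with $c \in \Sigma_{\mtd}(x, y)$, hence $x \mtd y = [x y c]$; and since $x, y, c$ all lie in $[a b c]$ with both triples triads, Theorem \ref{abcpqr} gives $[x y c] = [a b c]$. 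Therefore $x \mtd y = a \mtd b$.

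Running the identical argument with $p \mtd q = [p q r]$ in place of $a \mtd b$ yields $x \mtd y = p \mtd q$, and so $a \mtd b = p \mtd q$, as required.

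I do not expect a serious obstacle here: once the refined exchange property is available, the argument is short bookkeeping. The only points demanding care are (i) confirming that $x \mtd y$ is legitimately defined, which is immediate from AXIOM [2.2], and (ii) verifying the hypotheses of Theorem \ref{abcpqr} — that $x, y, c$ is a triad and that $x, y, c$ all belong to $[a b c]$ — both of which follow at once from $c \in \Sigma_{\mtd}(x, y)$ and from $x, y, c$ being members of $[a b c]$. One could instead route the final identification through Theorem \ref{z} and the well-definedness of $\mtd$, but invoking the refined Theorem \ref{exch} keeps the proof the most economical.
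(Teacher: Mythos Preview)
Your argument is correct and follows essentially the same route as the paper: both proofs take distinct $x, y$ in the intersection, invoke the refined exchange property to place a member of each defining triad in $\Sigma_{\mtd}(x,y)$, and then identify $[abc]$ and $[pqr]$ with the corresponding $[xy\cdot]$ via Theorem \ref{abcpqr}. The only cosmetic difference is that the paper links $[xyc]$ and $[xyr]$ by a direct appeal to Theorem \ref{z}, whereas you phrase the same step as ``both equal $x \mtd y$''; these are the same observation, since the well-definedness of $x \mtd y$ is precisely Theorem \ref{z}. (Minor slip: the refinement is recorded immediately \emph{after} Theorem \ref{exch}, not above it.)
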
 

\begin{proof} 
Let $a \mtd b = [ a b c ]$ and $p \mtd q = [ p q r ]$ contain the distinct lines $x$ and $y$. By Theorem \ref{exch} (as subsequently refined) it follows that $\Sigma_{\mtd} (x, y) $ contains at least one of $a, b, c$ and at least one of $p, q, r$; say it contains $c$ and $r$ so that $[ a b c ] = [ x y c ]$ and $[ x y r ] = [ p q r ]$. Finally, by Theorem \ref{z} we deduce from $c, r \in \Sigma_{\mtd} (x, y)$ that $[ x y c ] = [ x y r ]$ and conclude that 
$$ [ a b c ] = [ x y c ] =  [ x y r ] = [ p q r ].$$
\end{proof} 

Remark: By duality, it is likewise true that 
$$\: | \: (a \upY b) \cap (p \upY q)\: | \: > 1 \Rightarrow a \upY b = p \upY q.$$

\medbreak 

We close this section with some comments on AXIOM [3] and AXIOM [4]. As we have already observed, these axioms guarantee that points and planes are different and they coordinate the $\upY - \mtd$ incidence partitions. Once the decision has been made for a specific incident line pair $(a, b)$ as to which incidence class should be $\Sigma_{\upY} (a, b)$ and which should be $\Sigma_{\mtd} (a, b)$ there is no freedom: $\Sigma_{\upY} (p, q)$ and $\Sigma_{\mtd} (p, q)$ are thereby determined for every incident line pair $(p, q)$.  To be explicit, let us fix (say) the point $Z : = a \upY b$ and take any other secondary element $\aleph$. The nature of this secondary element (point or plane) is forced: all we need do is consider the intersection $\aleph \cap Z$. As $\aleph$ is other than $Z$ there are two possibilities: if $\: | \: \aleph \cap Z\: | \: = 1$ then $\aleph$ is a point; if $\: | \: \aleph \cap Z\: | \: \neq 1$ then $\aleph$ is a plane. To put this another way: given any two secondary elements, we may at once decide whether they are of like type (two points or two planes) or of opposite types (one of each); the partition $\LL_{\upY} \cupdot \LL_{\mtd}$ is forced in all respects but the naming of its two parts. In these terms, `duality' amounts simply to a switching of the two names: the very same set of lines that was formerly called a point assumes the identity of a plane, and vice versa.

\medbreak

\section{Projective Space} 

\medbreak 

Having thus presented our axiom system and drawn from it an array of results, we now frame our discussion in more explicitly geometric terms. We also relate our axiom system to the traditional Veblen-Young axiom system, thereby addressing the question of consistency. 

\medbreak

The move toward traditional projective space will be accompanied by the introduction of some standard notation. It is customary to denote points by upper case Roman letters (such as $A, B, C$), lines by lower case Roman letters (such as $a, b, c$), planes by lower case Greek letters (such as $\alpha, \beta, \gamma$). Thus, when the lines $p', p'' \in \LL$ are incident and distinct, we may write $P = p' \upY p''$ for the point that they determine and $\pi = p' \mtd p''$ for the plane that they determine; also, we may write $P = [ p' p''p_{\upY} ]$ for any $p_{\upY} \in \Sigma_{\upY} (p', p'')$ and $\pi = [ p' p'' p_{\mtd} ]$ for any $p_{\mtd} \in \Sigma_{\mtd} (p', p'').$

\medbreak 

It is also customary to introduce terminological extensions of incidence. Let $p$ be a line, $P$ a point and $\pi$ a plane. We say that $P$ is on $p$ precisely when $p \in P$; we may also say that $p$ is on $P$ or that $p$ passes through $P$. We say that $\pi$ is on $p$ precisely when $p \in \pi$; we may also say that $p$ is on $\pi$ or that $p$ lies in $\pi$. We say that $P$ and $\pi$ are incident (or that the one is on the other) precisely when $P \cap \pi \neq \emptyset$: that is, some line is on both $P$ and $\pi$. We say that certain points or planes are collinear precisely when they are all on one line; elements are coplanar when they are all on one plane and copunctal when they are all on one point. We may use synonymous and similar expressions as convenient. 

\medbreak 

We illustrate the interpretation of our axiomatic development in projective geometric terms by a couple of examples; these theorems are chosen with a view to their usefulness in relating our axiom system to that of  Veblen and Young. 

\medbreak 

For our first example, let $A$ and $B$ be two points on the same plane $\pi$. As $A$ and $B$ are distinct, Theorem \ref{equal} for $\upY$ shows that they intersect in a singleton; say $A \cap B = \{ l \}$. To say that $A$ and $B$ lie on $\pi$ is to say that the intersection of each with $\pi$ is nonempty; say $a \in A \cap \pi$ and $b \in B \cap \pi$.  As we should expect, though it is not immediately apparent, the line $AB : = l$ lies in $\pi$. 

\begin{theorem} \label{lpi}
If $A$ and $B$ are distinct points on the plane $\pi$ then the line $AB$ lies in $\pi$. 
\end{theorem}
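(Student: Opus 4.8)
The plan is to prove directly that the line $AB = l$ belongs to $\pi$, keeping the notation of the preceding paragraph: $A \cap B = \{ l \}$, with $a \in A \cap \pi$ and $b \in B \cap \pi$, and $AB := l$. First I would clear away the degenerate cases: if $a = l$ then $l = a \in \pi$ and there is nothing to prove, and likewise if $b = l$; so assume $a, b, l$ are three distinct lines. Since $a, l$ both lie in the point $A = A^{\da}$, since $b, l$ both lie in the point $B = B^{\da}$, and since $a, b$ both lie in the plane $\pi = \pi^{\da}$ (closedness being Theorem \ref{closed}), the lines $a, b, l$ are pairwise incident. In particular $a$ and $b$ are two distinct incident lines lying in both of the planes $\pi$ and $a \mtd b$, so Theorem \ref{equal} forces $a \mtd b = \pi$. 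Moreover $l \in \{ a, b \}^{\da} = [ a b ]$, whence AXIOM [2.3] gives $l \in (a \upY b) \cup (a \mtd b)$; if $l \in a \mtd b = \pi$ we are finished, so the whole task reduces to ruling out the alternative $l \in a \upY b$.

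To exploit that alternative I would introduce a second line of $A$ lying on $\pi$. As $A$ is a point and $\pi$ a plane, the theorem that a point and a plane never meet in a singleton, together with $a \in A \cap \pi$, supplies a line $m \in A \cap \pi$ with $m \neq a$. I would then check that $m \notin a \upY b$: otherwise the two points $a \upY b$ and $A$ would share the distinct lines $a$ and $m$, whence $a \upY b = A$ by the Remark following Theorem \ref{equal} (the $\upY$-version of that theorem), so $b \in a \upY b = A$ and therefore $b \in A \cap B = \{ l \}$, contradicting $b \neq l$. Now $m$ is incident to $a$ and to $b$ (all three lie in $\pi = \pi^{\da}$), so $m \in [ a b ] = (a \upY b) \cup (a \mtd b)$; since $[ a b ]^{\da} \subseteq a \upY b$ while $m \notin a \upY b$, this places $m$ in $(a \mtd b) \setminus [ a b ]^{\da} = \Sigma_{\mtd}(a, b)$.

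Finally I would play $l$ off against $m$ inside $\Sigma(a, b)$. From $l \in a \upY b$: if $l \in [ a b ]^{\da}$ then already $l \in a \mtd b = \pi$ and we are done, so we may assume $l \in (a \upY b) \setminus [ a b ]^{\da} = \Sigma_{\upY}(a, b)$. But any line of $\Sigma_{\upY}(a, b)$ and any line of $\Sigma_{\mtd}(a, b)$ are skew, so $l \: | \: m$ — contradicting $l \da m$, which holds because $l$ and $m$ both lie in the point $A = A^{\da}$. Hence the case $l \in a \upY b$ is impossible, and $l \in \pi$, i.e. $AB$ lies in $\pi$.

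The routine ingredients (the Galois identities, closedness of points and planes via Theorem \ref{closed}, the inclusions $[ a b ]^{\da} \subseteq a \upY b$ and $[ a b ]^{\da} \subseteq a \mtd b$, and the reading off of $\Sigma_{\upY}$, $\Sigma_{\mtd}$ as the non-trivial parts of $a \upY b$, $a \mtd b$) I would not dwell on. The one step that really carries the argument is the production of the auxiliary line $m$: it is exactly the theorem that a point and a plane never meet in a singleton that guarantees $A$ and $\pi$ have a common line other than $a$, and without that second line the contradiction cannot be set up. Once $m$ is available, the clash between the incidence classes of $\Sigma(a, b)$ is immediate.
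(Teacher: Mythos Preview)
Your argument is correct, but it takes a longer route than the paper's. Both proofs begin identically: dispose of degenerate cases, observe that $a,b,l$ are pairwise incident, deduce $a\mtd b=\pi$ from Theorem~\ref{equal}, and reduce to ruling out $l\in a\upY b$. At this point the paper simply sets $C:=a\upY b$ and applies the $\upY$-version of Theorem~\ref{equal} twice: from $\{l,a\}\subseteq A\cap C$ with $l\neq a$ it gets $A=C$, and from $\{l,b\}\subseteq B\cap C$ with $l\neq b$ it gets $B=C$, whence $A=B$, a contradiction. No auxiliary line is needed.

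Your detour through the auxiliary line $m$ works, but it is strictly heavier: you invoke the ``point and plane never meet in a singleton'' theorem to produce $m$, then analyse the $\Sigma_{\upY}/\Sigma_{\mtd}$ classes to force $l\,|\,m$ against $l\da m$. The irony is that the very step you use to show $m\notin a\upY b$ --- namely, that $\{a,m\}\subseteq A\cap(a\upY b)$ would force $A=a\upY b$ and hence $b=l$ --- is exactly the paper's argument; applying the same reasoning to $\{l,b\}\subseteq B\cap(a\upY b)$ would have finished the proof before $m$ was ever introduced. So your approach is sound, but the extra machinery (the non-singleton theorem, the $\Sigma$-class bookkeeping) buys nothing here.
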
 

\begin{proof} 
We continue to let $A \cap B = \{ l \}$, $a \in A \cap \pi$ and $b \in B \cap \pi$. If $a = b$ then from $A \ni a = b \in B$ it follows that $a = b = l$ and so $l \in \pi$; if $l = a$ or $l = b$ then $l \in \pi$ at once. We may therefore restrict attention to the case in which $l, a, b$ are distinct.  AXIOM [2.2] forces the lines $a, b \in \pi$ to be incident; as $\{ a, b \} \subseteq (a \mtd b) \cap \pi$ we deduce by Theorem \ref{equal} that $a \mtd b = \pi$. Now, assume $l \notin a \mtd b$. As $l \in [ a b ]$ already, it follows that $l \in a \upY b = : C$ in view of the reformulation of AXIOM [2.3] noted after Theorem \ref{z}. By Theorem \ref{equal} in its $\upY$ version, $l, a \in A \cap C$ and $l \neq a$ force $A = C$; likewise, $l \neq b$ in $B \cap C$ forces $B = C$. The deduction $A = B$ contradicts $A \neq B$ and so faults the assumption $l \notin a \mtd b$. Thus $l \in a \mtd b$ which with $a \mtd b = \pi$ places $l$ in $\pi$ as required. 
\end{proof} 

\medbreak

Dually, if two planes pass through a point then so does their common line. 

\medbreak 

For our second example, let $A, B, C$ be non-collinear points: thus, let $A, B, C \in \LL_{\upY}$ and $A \cap B \cap C = \emptyset$. Of course, the points $A, B, C$ are distinct: coincidence of two would render $A \cap B \cap C$ nonempty by AXIOM [4]. According to Theorem \ref{equal} in its $\upY$ version, $A \cap B$ is a singleton; say $A \cap B = \{ c \}$, with $B \cap C = \{ a \}$ and $C \cap A = \{ b \}$ likewise. Note that the lines $a, b, c$ are pairwise incident: for example, $a, b \in C$. Note also that $a, b, c$ are distinct: $a = b$ would imply $B \cap C \ni a = b \in C \cap A$ and thereby contradict the vacuity of $A \cap B \cap C$. 

\begin{theorem} \label{triangle}
If points $A, B, C$ are not collinear, with 
$$ B \cap C = \{ a \}, \: C \cap A = \{ b \}, \: A \cap B = \{ c \},$$
then 
$$ a \in \Sigma_{\mtd} (b, c), \: b \in \Sigma_{\mtd} (c, a), \: c \in \Sigma_{\mtd} (a, b),$$
and $A, B, C$ are coplanar, with [abc] the unique plane through all. 
\end{theorem}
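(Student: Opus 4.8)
The statement has two parts: first, that the three "opposite" lines $a,b,c$ of a triangle of non-collinear points form a trigon (the $\mtd$-version of a triad); second, that $A,B,C$ are coplanar with $[abc]$ the unique plane through all three. The natural strategy is to establish the triad condition first, then upgrade it to $\mtd$, and finally identify $[abc]$ as the common plane by exhibiting one line of each point inside it.

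\medbreak

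\emph{Step 1: $a,b,c$ form a triad.} We already know (from the discussion preceding the theorem) that $a,b,c$ are pairwise incident and distinct, so it remains to verify one of the equivalent conditions of Theorem \ref{Sigma}, say $c \in \Sigma(a,b)$. Suppose not: then $c \in \{a,b\}^{\da\da}$, so $\{a,b,c\} \subseteq \{a,b\}^{\da\da}$ and hence $[ab] \subseteq [abc]$. But $a \in B \cap C$ and $b \in C \cap A$ means $a,b \in A^{\da}$-type relations; more directly, I would note that $c \in A \cap B$, so $A$ and $B$ are both among the planes/points of $[abc]$... actually the cleaner route is: if $[ab] \subseteq [abc]$ then $[ab]$ contains no skew pair (by AXIOM [2.2]), contradicting AXIOM [2.1]. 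That contradiction gives $c \in \Sigma(a,b)$, and by Theorem \ref{Sigma} all three triad conditions hold.

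\medbreak

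\emph{Step 2: the triad is of type $\mtd$, not $\upY$.} By Theorem \ref{upYmtd}, either all three $\Sigma_{\upY}$ conditions hold or all three $\Sigma_{\mtd}$ conditions hold. Suppose the $\upY$ alternative holds, i.e. $c \in \Sigma_{\upY}(a,b)$, so that $[abc] = a \upY b$ is a \emph{point}. Now $A \ni c$, $B \ni c$, and also $b \in A$, $a \in B$. Since $a, b, c \in A \cap [abc]$? Here I need to be careful, but the idea is that $c \in A$ and $c \in [abc]$, while $[abc]$ and $A$ are both points; if they shared two lines they would be equal by the $\upY$-version of Theorem \ref{equal}. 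I would look for two common lines. We have $c \in A$; and $b \in A$ (since $b \in C \cap A$), and $b \in [abc]$. So $b, c \in A \cap [abc]$, and $b \neq c$, forcing $A = [abc]$ by Theorem \ref{equal} ($\upY$ version). Symmetrically $B = [abc]$ and $C = [abc]$, so $A = B = C$, contradicting $A \cap B \cap C = \emptyset$ (a point is nonempty by Theorem \ref{closed} together with AXIOM [1], or directly since $c \in A$). Hence the $\mtd$ alternative holds: $a \in \Sigma_{\mtd}(b,c)$, etc., and $[abc] = a \mtd b = b \mtd c = c \mtd a$ is a plane, call it $\pi$.

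\medbreak

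\emph{Step 3: $A,B,C$ lie on $\pi$, and $\pi$ is the unique such plane.} For incidence of the point $A$ with the plane $\pi = [abc]$ we must produce a line in $A \cap \pi$. Both $b$ and $c$ lie in $A$ (as $b \in C \cap A$, $c \in A \cap B$), and both $b$ and $c$ lie in $[abc] = \pi$; so $b, c \in A \cap \pi$ and $A$ is on $\pi$. Symmetrically $B$ is on $\pi$ (via $a, c$) and $C$ is on $\pi$ (via $a, b$). For uniqueness, suppose $\pi'$ is any plane with $A, B, C$ all on $\pi'$. Then $A \cap \pi' \neq \emptyset$ and $B \cap \pi' \neq \emptyset$; I claim $\pi'$ must contain $a, b, c$. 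Indeed $c \in A \cap B$, and if $\pi'$ meets $A$ in some line $a'$ and meets $B$ in some line $b'$, then $a', b' \in \pi'$ and by AXIOM [2.2] they are incident, and $c$ is incident to all of... the cleanest finish is to show $c \in \pi'$: since $\pi'$ is on $A$, there is a line in $A \cap \pi'$; but then by Theorem \ref{lpi} applied suitably — actually the direct argument is that $\pi'$ on $A$ and $\pi'$ on $B$ gives a line of $\pi'$ through the "point" $A$ and one through $B$; pushing through, one shows the line $AB = c$ lies in $\pi'$ by the dual of Theorem \ref{lpi} or by a Theorem \ref{equal} argument. Repeating for $a$ and $b$ puts $a, b, c \in \pi'$, and since $a,b,c$ is a triad with $\pi' $ a plane, Theorem \ref{abcpqr} (or Theorem \ref{z}) forces $\pi' = [abc] = \pi$.

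\medbreak

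\emph{Main obstacle.} The delicate point is Step 3's uniqueness: translating "$\pi'$ is on each of $A, B, C$" (an existential statement about shared lines) into the concrete conclusion $a, b, c \in \pi'$. This requires either a careful invocation of Theorem \ref{lpi} (or its stated dual, "if two planes pass through a point then so does their common line") applied to pairs among $A, B, C$ and $\pi'$, or a self-contained argument using Theorem \ref{equal} to pin down the shared lines. I expect the author handles this by citing Theorem \ref{lpi} and Theorem \ref{equal} in tandem; the rest is bookkeeping with the incidences $a \in B \cap C$, $b \in C \cap A$, $c \in A \cap B$.
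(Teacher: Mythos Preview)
Your Step 1 contains a genuine gap. You invoke AXIOM [2.2] to conclude that $[abc]$ contains no skew pair, but the hypothesis of AXIOM [2.2] is precisely that the third line lies in $\Sigma(a,b)$ --- which is exactly what you are trying to prove. Under your assumption $c \notin \Sigma(a,b)$ (that is, $c \in [ab]^{\da}$) one actually has $[abc] = [ab]$, and $[ab]$ certainly \emph{does} contain skew pairs by AXIOM [2.1]; no contradiction arises along this route. The argument you are imitating from Theorem \ref{Sigma} starts from the hypothesis that \emph{one} of the three $\Sigma$-conditions already holds, and that hypothesis is not available here.

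The paper sidesteps this by a more direct route that merges your Steps 1 and 2. Since $a, b \in C$ and $C$ is a point, one has $C = a \upY b$ outright; write $C = [abc_{\upY}]$ with $c_{\upY} \in \Sigma_{\upY}(a,b)$. Non-collinearity gives $c \in A \cap B$ but $c \notin C$, hence $c \: | \: c_{\upY}$. This single skew relation simultaneously yields $c \in \Sigma(a,b)$ (it has a skew partner in $[ab]$) and $c \in \Sigma_{\mtd}(a,b)$ (that partner lies in the $\upY$-class), after which Theorem \ref{upYmtd} finishes the first assertion. Observe that the same identification $C = a \upY b$ would also repair your Step 1: if $c \in [ab]^{\da}$ then $c \in (a \upY b)^{\da} = a \upY b = C$ by Theorem \ref{closed}, contradicting $A \cap B \cap C = \emptyset$.

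Your Step 2 contradiction via Theorem \ref{equal} is correct once Step 1 is secured, though more roundabout than the paper's direct identification. Your Step 3 matches the paper: exhibit $b,c \in \pi \cap A$ (and cyclically) for coplanarity; for uniqueness, apply Theorem \ref{lpi} to place $a, b, c$ in any competing plane $\pi'$ and then invoke Theorem \ref{abcpqr} to force $\pi' = [abc]$. Your instinct in the ``Main obstacle'' paragraph is exactly right --- Theorem \ref{lpi} is precisely the tool used, not its dual.
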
 

\begin{proof} 
As noted, $a, b, c$ are distinct and pairwise incident. In fact, $C \ni a, b$ is the point $a \upY b$ determined by $a$ and $b$; say $C = [ a b c_{\upY} ]$ with $c_{\upY} \in \Sigma_{\upY} (a, b)$. Note that $c \in A \cap B \subseteq [a b]$; as $A, B, C$ are not collinear, $c \notin C$ whence $c \: | \: c_{\upY}$. Thus $c \in \Sigma_{\mtd} (a, b)$; by Theorem \ref{upYmtd} also $ a \in \Sigma_{\mtd} (b, c)$ and $b \in \Sigma_{\mtd} (c, a)$. The plane 
$$\pi: = [ a b c ] = b \mtd c = c \mtd a = a \mtd b$$ 
has 
$$b, c \in \pi \cap A; \: c, a \in \pi \cap B; \: a, b \in \pi \cap C$$ 
and so passes through the points $A, B, C$ which are therefore coplanar. Finally, if $r \in \Sigma_{\mtd} (p, q)$ and the plane $[p q r]$ passes through $A, B, C$ then $[p q r]$ contains $a, b, c$ by Theorem \ref{lpi} and so $[p q r] = [a b c]$ by Theorem \ref{abcpqr}. 
\end{proof} 

Dually, if three planes do not share a line then they share a unique point. 

\medbreak 

As a last geometric example, AXIOM [3] and AXIOM [4] provide each triangle $ABC$ with a vertex $O$ so as to form a tetrahedron; dually, they provide each vertex with a triangle to form a tetrahedron. Explicitly, let $A B C$ be a triangle with (non-collinear) vertices $A, B, C$. As in Theorem \ref{triangle}, $ B \cap C = \{ a \}, \: C \cap A = \{ b \}, \: A \cap B = \{ c \}$ with $[ a b c ] = b \mtd c = c \mtd a = a \mtd b$. A triad $p, q, r$ as furnished by AXIOM [3] determines by AXIOM [4] a point $O = [p q r] = q \upY r = r \upY p = p \upY q$ not on the plane $[a b c]$. Now $O A B C$ is a tetrahedron, with vertex $O$ and triangular base $A B C$;  AXIOM [4] and Theorem \ref{equal} grant us the additional edges $\hat{a} = OA$, $\hat{b} = OB$, $\hat{c} = OC$. The set of lines $T = \{ a, b, c, \hat{a}, \hat{b}, \hat{c} \}$ is pairwise incident except for the pairs $a \: | \: \hat{a}$, $b \: | \: \hat{b}$, $c \: | \: \hat{c}$.  It may be checked that with this induced incidence,  $T$ yields a finite geometry that satisfies each of our axioms except AXIOM [1]: for instance, $[a b] = \{ a, b, c, \hat{c} \}$ and $[a b]^{\da} = \{ a, b \}$ so that $\Sigma (a, b)= \{ c, \hat{c} \}$; further, $\Sigma_{\mtd} (a, b) = \{ c \}$ and $\Sigma_{\upY} (a, b) = \{ \hat{c} \}$.

\medbreak 

We now demonstrate that in our axiom system, the traditional axioms for three-dimensional projective space are satisfied; to be specific, we shall demonstrate satisfaction of the basic axioms set forth in the classic treatise [VY] by Veblen and Young. There follows a list of the axioms for extension and alignment from Chapter I of [VY], each axiom being accompanied by a proof of its validity in our system. We follow the wording in [VY] exactly, except for two inessential departures: at (E3$'$), a closure axiom that caps the dimension at three;  and at (A3), the Veblen-Young version of the Pasch axiom.  

\medbreak 

(E0): {\it There are at least three points on every line}. 

\medbreak 

[AXIOM [1] furnishes each line $l$ with a skew triple $\{ x, y, z \} \subseteq l^{\da}$. The points $X = l \upY x$, $Y = l \upY y$, $Z = l \upY z$ on $l$ are distinct: observe that $z \in Z$ but $z \notin X \cup Y$.] 

\medbreak 

(E1): {\it There exists at least one line}. 

\medbreak 

[The set $\LL$ is nonempty!] 

\medbreak 

(E2): {\it All points are not on the same line}. 

\medbreak 

[Let $l$ be any line. AXIOM [1] furnishes a skew triple $x, y, z$ in $l^{\da}$ and then a skew triple $u, v, w$ in $z^{\da}$. We claim that $l \notin [ u v w ]$. To justify this claim, suppose to the contrary that $l \in [ u v w ]$: as $z \in [ u v w ]$ also, either $l = z$ or $l \: | \: z$ would follow by Theorem \ref{skew}; but $l \neq z$ and $l \da z$, so we have a contradiction. Thus, $l$ fails to be incident with at least one of $u, v, w$. Finally, if (say) $l \: | \: w$ then $z \upY w$ is a point not on $l$. Alternatively, simply pair $l$ with a distinct incident line $m$ (by AXIOM [1] again) and apply the following axiom to the plane $l \mtd m$.]

\medbreak 

(E3): {\it All points are not on the same plane}. 

\medbreak 

[This comes from AXIOM [3] by way of AXIOM [4]: for each plane $a \mtd b = [a b c]$ there exists a point $p \upY q = [p q r]$ such that $[a b c] \cap [p q r] = \emptyset$ whence $p \upY q$ does not lie on $a \mtd b$.] 

\medbreak 

(E3$'$): {\it Any two distinct planes have a line in common}. 

\medbreak 

[This is essentially AXIOM [4] for $\mtd$.] 

\medbreak 

(A1): {\it If $A$ and $B$ are distinct points, there is at least one line on both $A$ and $B$}. 

\medbreak 

[This is simply AXIOM [4] for $\upY$.] 

\medbreak 

(A2): {\it If $A$ and $B$ are distinct points, there is not more than one line on both $A$ and $B$}.

\medbreak 

[This is the contrapositive of Theorem \ref{equal} in its $\upY$ version.]

\medbreak 

(A3): {\it If $A, B, C$ are points not on the same line, the line joining distinct points $D$ (on the line $BC$) and $E$ (on the line $CA$) meets the line $AB$}. 

\medbreak 

[As in Theorem \ref{triangle}, $ B \cap C = \{ a \}, \: C \cap A = \{ b \}, \: A \cap B = \{ c \}$ where $c \in \Sigma_{\mtd} (a, b)$. We may suppose not only that $D$ and $E$ are distinct but also that they differ from $C$: indeed, if (say) $D = C$ then $DE = CE = b$ and this line certainly meets $c = AB$. As $C, D, E$ are distinct, the $\upY$ version of Theorem \ref{equal} tells us that $ C \cap D = \{ a \}, C \cap E = \{ b \},$ and (say) $D \cap E = \{ f \}$; in particular, $C \cap D \cap E = \emptyset$ so that $f \in D \cap E$ does not pass through $C$. From $a, f \in D$ and $b, f \in E$ we deduce that $f \in [ a b ]$ which with $f \notin C = a \upY b$ implies that $f \in a \mtd b$ by AXIOM [2.3] in the form noted after Theorem \ref{z}. Finally, if $f \neq c$ then $DE = f \in a \mtd b$ meets $AB = c \in a \mtd b$ in the point $F = f \upY c$ by AXIOM [2.2] while if $f = c$ then $DE = AB$ and there is nothing to do.]

\medbreak 

In the opposite direction, let us instead suppose given projective space as governed by the Veblen-Young axioms. Take for $\LL$ the set comprising all lines in this projective space; take for $\da$ the incidence of such lines. By definition, lines $a$ and $b$ are incident iff they have a (Veblen-Young) point in common. When $a$ and $b$ are distinct this point is unique and may be denoted by $a \wedge b$; when $a$ and $b$ are distinct they also lie in a unique (Veblen-Young) plane, which may be denoted by $a \vee b$. Moreover, if lines $a$ and $b$ lie in a plane then they pass through a point. All of this is evident from Chapter I of [VY]. 

\medbreak 

Now, to see that our AXIOM [1] is satisfied, let $l$ be any line; working within the framework of [VY] Chapter I we fashion three pairwise skew lines $x, y, z$ meeting $l$ as follows. Axiom (E0) places on $l$ distinct points $X$, $Y$, $Z$. According to (E2) there is a point $X'$ not on $l$; let $x = X X'$ be the line that exists by (A1) and is unique by (A2). The line $l$ and point $X'$ together determine a plane $\pi = X' l = X' X Y$. According to (E3) there is a point $Y'$ not on $\pi$; let $y = Y Y'$. Further, let $l' = X' Y'$. The lines $l$ and $l'$ are skew: if $l = X Y$ and $l' = X' Y'$ were to meet then $Y'$ would lie on the plane $X' X Y$; similarly, the lines $x$ and $y$ are skew. The line $l'$ contains $X'$, $Y'$, and at least one more point; say $Z'$. Finally, $z = Z Z'$ is skew to $x$ and $y$: otherwise, (A3) would force $l = ZX = ZY$ to meet $l' = Z' X' = Z' Y'$. 

\medbreak 

We leave the testing of AXIOM [2] - AXIOM [4] as exercises, with only the following brief comments. Let $a$ and $b$ be distinct, incident lines; as noted above, they determine a unique point $a \wedge b$ and a unique plane $a \vee b$. Write $\lsem a \wedge b \rsem$ for the set of all lines through the point $a \wedge b$ and $\lsem a \vee b \rsem$ for the set of all lines in the plane $a \vee b$. In classical terms, the intersection of $\lsem a \wedge b \rsem$ and $\lsem a \vee b \rsem$ is a {\it flat pencil}. The symmetric difference of $\lsem a \wedge b \rsem$ and $\lsem a \vee b \rsem$ is precisely the set $\Sigma (a, b)$ comprising all lines that are one of a skew pair in $[ a b ]$: 
$$\Sigma (a, b) = \lsem a \wedge b \rsem \Delta \lsem a \vee b \rsem.$$
Of course, it is reasonable (but not compulsory) to label the incidence classes in $\Sigma (a, b)$ as 
$$\Sigma_{\upY} (a, b) = \lsem a \wedge b \rsem \setminus \lsem a \vee b \rsem$$
and
$$\Sigma_{\mtd} (a, b) =  \lsem a \vee b \rsem \setminus \lsem a \wedge b \rsem$$
so that the point $a \upY b$ in our terminology coincides with the bundle $\lsem a \wedge b \rsem$ of lines through the point $a \wedge b$ in classical terminology; the contrary labelling would perhaps be perverse.  

\bigbreak

\begin{center} 
{\small R}{\footnotesize EFERENCES}
\end{center} 
\medbreak 

[VY] Oswald Veblen and John Wesley Young, {\it Projective Geometry}, Volume I, Ginn and Company, Boston (1910). 

\medbreak

\end{document}